\newtheorem{theorem}[subsection]{Theorem}
\newtheorem{proposition}[subsection]{Proposition}
\newtheorem{lemma}[subsection]{Lemma}
\theoremstyle{definition}
\newtheorem{remark}[subsection]{Remark}
\newtheorem{question}[subsection]{Question}
\numberwithin{equation}{subsection}
\begin{document}

\title {The exotic inverted Kloosterman sum}

\author{Lei Fu and Daqing Wan}
\address{Yau Mathematical Sciences Center, Tsinghua University, Beijing 100084, P. R. China} 
\email{leifu@mail.tsinghua.edu.cn}
\address{Department of Mathematics, University of California, Irvine, CA 92697, USA}
\email{dwan@math.uci.edu}

\subjclass[2020]{14F20, 11L05}

\date{}
\maketitle

\begin{abstract}
Let $B$ be a product of finitely many finite fields containing $\mathbb F_q$, $\psi:\mathbb F_q\to \overline{\mathbb Q}_\ell^*$ a
nontrivial additive character, and $\chi: B^*\to \overline{\mathbb Q}_\ell^*$ a multiplicative character.  
Katz introduced the so-called exotic
inverted Kloosterman sum 
\begin{eqnarray*}
\mathrm{EIK}(\mathbb F_q, a):=\sum_{\substack{x\in B^* \\ \mathrm{Tr}_{B/\mathbb F_q}(x)\not =0\\
\mathrm{N}_{B/\mathbb F_q}(x)=a}}
\chi(x)\psi\Big(\frac{1}{\mathrm{Tr}_{B/\mathbb F_q}(x)}\Big), \ \ a\in \mathbb F_q^*.
\end{eqnarray*}
We estimate this sum using $\ell$-adic cohomology theory. Our main result is that, up to a trivial term,  the associated exotic 
inverted Kloosterman sheaf is lisse of rank at most $2(n+1)$ and mixed of weight at most $n$, where $n+1 = \dim_{\mathbb F_q}B$. Up to a trivial main term, this gives the expected square root cancellation.

\end{abstract} 

\section{Introduction}

\subsection{The exotic inverted Kloosterman sum} Let $\mathbb F_q$ be a finite field of characteristic $p$ with $q$ elements, $B$ a finite {\'e}tale 
$\mathbb F_q$-algebra of degree $n+1\geq 2$, $\ell$ a prime number different from $p$,  $\chi: B^*\to \overline{\mathbb Q}_\ell^*$ a multiplicative character, 
and $\psi:\mathbb F_q\to \overline{\mathbb Q}_\ell^*$ a nontrivial additive character. Deligne \cite[Sommes trig. 7.18]{SGA4.5} introduces 
the following exotic Kloosterman sum
\begin{eqnarray*}
\mathrm{EK}(\mathbb F_q, a):=\sum_{\substack{x\in B^*, ~
\mathrm{N}_{B/\mathbb F_q}(x)=a}}
\chi(x)\psi\Big(\mathrm{Tr}_{B/\mathbb F_q}(x)\Big),  \ a\in \mathbb F_q^*.
\end{eqnarray*}
where ${\rm Tr}_{B/\mathbb F_q}$ (resp. ${\rm N}_{B/\mathbb F_q}$) 
denotes the trace (resp. norm) map from $B$ to $\mathbb F_q$.  
This sum is studied by Katz \cite[8.8.5]{K0} via his theory of 
exotic Kloosterman sheaf on the torus $\mathbb G_m$,  which is a lisse sheaf of rank $n+1$, pure of weight $n$. This 
implies the following square root estimate 
$$\left|\mathrm{EK}(\mathbb F_q, a) \right| \leq (n+1)q^{n/2}.$$
Note that in the split case where $B=\mathbb F_q^{n+1}$ and $\chi=1$, this estimate reduces to Deligne's well known square root estimate 
for the classical hyper-Kloosterman sum 
$$\Big|\sum_{\substack{x_i\in \mathbb F_q^*, \  x_1\cdots x_{n+1}=a}}
\psi(x_1+\cdots +x_{n+1})\Big| \leq (n+1)q^{n/2}. 
$$
By inverting $\mathrm{Tr}_{B/\mathbb F_q}(x)$ in the definition of 
$\mathrm{EK}(\mathbb F_q, a)$, Katz \cite{K} introduces the following more complicated 
 \emph{exotic inverted Kloosterman sum} 
\begin{eqnarray*}
\mathrm{EIK}(\mathbb F_q, a):=\sum_{\substack{x\in B^*\\ \mathrm{Tr}_{B/\mathbb F_q}(x)\not =0\\
\mathrm{N}_{B/\mathbb F_q}(x)=a}}
\chi(x)\psi\Big(\frac{1}{\mathrm{Tr}_{B/\mathbb F_q}(x)}\Big), \ \ a\in \mathbb F_q^*.
\end{eqnarray*}
Motivated by applications in Ramanujuan graphs, Katz \cite{K} himself studies the case $n=1$. 
Recently Lin-Wan \cite{LW} studies the split case $B=\mathbb F_q^{n+1}$ for general $n$ by reducing the inverted Kloosterman sum
to a toric exponential sum of a Laurent polynomial. 
This requires the condition $(n+1, p)=1$ in order for the Laurent polynomial to be non-degenerate. 
The case $(n+1, p)>1$ is left open in \cite{LW}.  In the present paper, we study the exotic inverted Kloosterman sum $\mathrm{EIK}(\mathbb F_q, a)$ in the general non-split case for all $n$. The key is the construction of 
the exotic inverted Kloosterman sheaf over the torus $\mathbb G_m$, which is shown to be lisse of rank at most $2(n+1)$ 
and mixed of weights at most $n$ if either $p>2$ or $p=2$ but $n \not\equiv 1 \mod 4$. In the exceptional case that 
$p=2$ and $n \equiv 1 \mod 4$, this exotic inverted sheaf is only lisse over the punctured torus 
$\mathbb G_m -\{ (\frac{n+1}{2})^{-(n+1)}\}$.   As a consequence of this cohomological result, we 
also obtain a square root estimate for the exotic inverted Kloosterman sum $\mathrm{EIK}(\mathbb F_q, a)$. 

To state our bound more precisely, let ${\bf N}^{-1}(1)= \{x\in B^*: \mathrm N_{B/\mathbb F_q}(x)=1\}$  
be the kernel of the norm map, which is a subgroup of $B^*$ with index $q-1$. 
If $\chi|_{{\bf N}^{-1}(1)}=1$, then  $\chi({{\bf N}^{-1}(a))}$ is well defined for all $a\in \mathbb F_q^*$. A weaker version of our main result is 

\begin{theorem}\label{thm0} Let $B$ be a finite {\'e}tale 
$\mathbb F_q$-algebra of degree $n+1$ with $(n+1, p)=1$.  If $\chi|_{{\bf N}^{-1}(1)}\not=1$, then for any $a \in \mathbb F_q^*$, we have 
$$\left|\mathrm{EIK}(\mathbb F_q, a) \right| \leq (2n+2)q^{n/2}.$$
If $\chi|_{{\bf N}^{-1}(1)}=1$, then we have
$$\left|\mathrm{EIK}(\mathbb F_q, a)  + \chi({{\bf N}^{-1}(a))} \frac{|B^*|}{q(q-1)} \right| \leq (2n+2)q^{n/2}.$$
\end{theorem}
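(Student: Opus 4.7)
The plan is to realize $\mathrm{EIK}(\mathbb{F}_q,a)$ as a Frobenius trace on the stalk at $a$ of a constructible $\ell$-adic complex on $\mathbb{G}_m$ and then invoke the paper's main cohomological result together with Deligne's Weil II. First I would introduce the Weil restriction $T := \mathrm{Res}_{B/\mathbb{F}_q}(\mathbb{G}_m)$, an algebraic torus of dimension $n+1$ over $\mathbb{F}_q$ with $T(\mathbb{F}_q) = B^*$, and the open subvariety $U := \{x \in T : \mathrm{Tr}_{B/\mathbb{F}_q}(x) \neq 0\}$. Let $h : U \to \mathbb{G}_m$ be the norm map and $g : U \to \mathbb{G}_m$ the inverse-trace map $x \mapsto 1/\mathrm{Tr}_{B/\mathbb{F}_q}(x)$. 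The character $\chi$ produces (via the Lang torsor for $T$) a lisse rank-one sheaf $\mathcal{L}_\chi$ on $T$, and together with the Artin--Schreier sheaf $\mathcal{L}_\psi$ on $\mathbb{A}^1$ one forms the exotic inverted Kloosterman complex
$$
\mathcal{K} \;:=\; Rh_!\bigl(\mathcal{L}_\chi|_U \otimes g^* \mathcal{L}_\psi\bigr) \;\in\; D^b_c(\mathbb{G}_m, \overline{\mathbb{Q}}_\ell).
$$
Proper base change and the Grothendieck--Lefschetz trace formula applied to each geometric fiber $F_a = h^{-1}(a) \cap U$ then give
$$
\mathrm{EIK}(\mathbb{F}_q, a) \;=\; \mathrm{Tr}\bigl(\mathrm{Frob}_a \mid \mathcal{K}_{\bar a}\bigr) \qquad \text{for all } a \in \mathbb{F}_q^*.
$$

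Second, I would apply the main cohomological result of the paper. Under the assumption $(n+1, p) = 1$, which automatically avoids the exceptional case $p = 2,\ n \equiv 1 \bmod 4$ and hence ensures lisse-ness on all of $\mathbb{G}_m$, this yields a decomposition $\mathcal{K} \simeq \mathcal{K}^\circ \oplus \mathcal{M}$ in the derived category on $\mathbb{G}_m$, where (up to the appropriate shift) $\mathcal{K}^\circ$ is lisse of rank at most $2(n+1)$ and mixed of weight at most $n$, and $\mathcal{M}$ is a ``trivial'' direct summand concentrated in the top-degree piece $R^{2n} h_!$. Deligne's purity bound forces each Frobenius eigenvalue on $\mathcal{K}^\circ_{\bar a}$ to satisfy $|\alpha| \leq q^{n/2}$, so that
$$
\bigl| \mathrm{Tr}(\mathrm{Frob}_a \mid \mathcal{K}^\circ_{\bar a}) \bigr| \;\leq\; 2(n+1)\, q^{n/2}.
$$

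Third, I would compute $\mathrm{Tr}(\mathrm{Frob}_a \mid \mathcal{M}_{\bar a})$. The summand $\mathcal{M}$ is nonzero precisely when the Kummer sheaf $\mathcal{L}_\chi$ has trivial geometric monodromy along every norm-fiber, equivalently when $\chi|_{\mathbf{N}^{-1}(1)} = 1$. In that case $\chi$ descends through the norm to a character $\tilde\chi$ of $\mathbb{F}_q^*$ with $\tilde\chi(a) = \chi(\mathbf{N}^{-1}(a))$, so on each fiber $\mathbf{N}^{-1}(a)$ the Kummer factor contributes the single constant $\chi(\mathbf{N}^{-1}(a))$. A direct Frobenius trace computation on the top-degree fiber cohomology, combined with $|\mathbf{N}^{-1}(a)| = |B^*|/(q-1)$ and the Fourier identity $\sum_{t \in \mathbb{F}_q^*} \psi(1/t) = -1$, then identifies
$$
\mathrm{Tr}(\mathrm{Frob}_a \mid \mathcal{M}_{\bar a}) \;=\; -\,\chi(\mathbf{N}^{-1}(a)) \cdot \frac{|B^*|}{q(q-1)}.
$$
When $\chi|_{\mathbf{N}^{-1}(1)} \neq 1$, the summand $\mathcal{M}$ vanishes and only the $\mathcal{K}^\circ$ bound contributes. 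Combining the two pieces yields both estimates in Theorem~\ref{thm0}.

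The real obstacle is of course the underlying cohomological theorem itself -- bounding the rank of $\mathcal{K}^\circ$ by $2(n+1)$, controlling its weights, and isolating the trivial summand $\mathcal{M}$ -- which requires a careful analysis of the derived pushforward along the norm map, together with ramification information (Swan conductors at $0$ and $\infty$) and the hypothesis $(n+1,p)=1$ to exclude wild degeneracies. Once this is in hand, the deduction of Theorem~\ref{thm0} is a short routine application of the trace formula, Deligne's purity, and the above identification of $\mathcal{M}$.
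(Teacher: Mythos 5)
Your proposal follows the same route as the paper: realize $\mathrm{EIK}(\mathbb F_q,a)$ as the Frobenius trace on the stalk of the exotic inverted Kloosterman complex, then deduce the bound from the paper's cohomological Theorem~\ref{thm} together with Deligne's weight estimate. Two technical corrections are worth recording. First, the paper produces a \emph{distinguished triangle} $\mathcal H[-n]\to\mathrm{EIK}\to R\mathbf N_!\mathcal K_\chi(1)[1]\to$, not a direct-sum decomposition $\mathcal K\simeq\mathcal K^\circ\oplus\mathcal M$; the splitting you assert is not justified and not needed, since Frobenius traces are already additive over distinguished triangles. Second, the trivial term $R\mathbf N_!\mathcal K_\chi(1)[1]$ is \emph{not} concentrated in a single top degree $R^{2n}h_!$: it carries the compactly supported cohomology of the $n$-dimensional norm fibers in degrees $n$ through $2n$. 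Its trace $-\chi(\mathbf N^{-1}(a))\,|B^*|/(q(q-1))$ is obtained in the paper from the Lefschetz count $|\mathbf N^{-1}(a)(\mathbb F_q)|=|B^*|/(q-1)$ on $R\mathbf N_!\overline{\mathbb Q}_\ell$, together with the Tate twist $(1)$ and the shift $[1]$ (the latter supplying the negative sign), not from the Fourier identity $\sum_{t\neq 0}\psi(1/t)=-1$ as you suggest; the $\psi$-factor is eliminated entirely in isolating this term, which lives over the $z=0$ locus after the affine-line/punctured-line decomposition. The numerical outcome agrees with yours, and you correctly identify that the substance lies in proving Theorem~\ref{thm}, after which this deduction is routine.
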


As mentioned above, this bound is proved in \cite{LW} in the split case $B=\mathbb F_q^{n+1}$.  If $(n+1, p)>1$, the 
statement is more subtle and the estimate is somewhat stronger.  

\begin{theorem}\label{thm00} Let $B$ be a finite {\'e}tale 
$\mathbb F_q$-algebra of degree $n+1$ with $p|(n+1)$.  Write $n+1 =p^km \geq 2m$ with $(m, p)=1$. 
Suppose either $p^k>2$ or $p^k=2$ but $a\not =m^{-(n+1)}$. Suppose furthermore that 
$\psi=\psi_0\circ\mathrm{Tr}_{\mathbb F_q/\mathbb F_p}$
for some non-trivial additive character $\psi_0$ of $\mathbb F_p$.
If $\chi|_{{\bf N}^{-1}(1)}\not=1$, we have 
$$\left|\mathrm{EIK}(\mathbb F_q, a) \right| \leq (n+1)q^{n/2}.$$
If $\chi|_{{\bf N}^{-1}(1)}=1$, then we have
$$\left|\mathrm{EIK}(\mathbb F_q, a)  + \chi({{\bf N}^{-1}(a))} \frac{|B^*|}{q(q-1)} \right| \leq (n+1)q^{n/2}.$$

\end{theorem}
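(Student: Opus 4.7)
The strategy is to build the exotic inverted Kloosterman sheaf $\mathcal F$ on $\mathbb G_m$, to show that under the hypotheses of Theorem~\ref{thm00} it is lisse on $\mathbb G_m$ (or on $\mathbb G_m\setminus\{m^{-(n+1)}\}$ in the exceptional characteristic-$2$ case) of rank at most $n+1$ and mixed of weights at most $n$, and then to convert this into the stated bound via the Grothendieck--Lefschetz trace formula and Deligne's Weil~II. Concretely, let $W=\mathrm{Res}_{B/\mathbb F_q}(\mathbb G_{m,B})$ so that $W(\mathbb F_q)=B^*$, let $\mathrm N:W\to\mathbb G_m$ and $\mathrm{Tr}:W\to\mathbb A^1$ be the usual norm and trace, let $j:U\hookrightarrow W$ be the open subscheme where $\mathrm{Tr}\neq0$, and let $\iota:U\to\mathbb G_m$ be $x\mapsto 1/\mathrm{Tr}(x)$. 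Form the rank-one sheaf
$$\mathcal G\;=\;\mathcal L_\chi\otimes\iota^*\mathcal L_\psi$$
on $U$, where $\mathcal L_\chi$ is the Kummer-type sheaf attached to the character $\chi$ of the $\mathbb F_q$-torus $W$ and $\mathcal L_\psi$ is the Artin--Schreier sheaf, and set $\mathcal F:=R^n\mathrm N_!(j_!\mathcal G)$ on $\mathbb G_m$. Proper base change identifies the Frobenius trace of $\mathcal F$ at $a$ with $(-1)^n\mathrm{EIK}(\mathbb F_q,a)$ modulo the contributions of the other $R^i\mathrm N_!(j_!\mathcal G)$.

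\textbf{Rank and weights.} Artin vanishing concentrates the cohomology of the smooth affine fibres of $\mathrm N$ in degrees $[n,2n]$, and Weil~II forces $\mathcal F$ to be mixed of weights $\leq n$ because $\mathcal G$ is pure of weight $0$. The decisive step is bounding the geometric rank. After a finite Galois base extension splitting $B$, we may identify $W$ with $\mathbb G_m^{n+1}$ and $\mathrm{Tr},\mathrm N$ with $\sum x_i$ and $\prod x_i$; the rank of $\mathcal F_{\bar a}$ is then (up to sign) the Euler--Poincar\'e characteristic with compact support of
$$U_a=\Bigl\{(x_1,\ldots,x_{n+1})\in\mathbb G_m^{n+1}:\textstyle\prod x_i=a,\ \sum x_i\neq0\Bigr\}$$
with coefficients in $\mathcal G|_{U_a}$. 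This is computed by a local Swan/total-drop analysis at the boundary divisors of a smooth compactification of $U_a$. Under $p\mid(n+1)$ together with $\psi=\psi_0\circ\mathrm{Tr}_{\mathbb F_q/\mathbb F_p}$, the wild ramification of $\iota^*\mathcal L_\psi$ along the closure of $\{\mathrm{Tr}=0\}$ is governed by a Witt-vector Artin--Schreier equation whose break is strictly smaller than the tame-range drop in the generic situation of Theorem~\ref{thm0}; a direct count then shows the rank collapses from the generic $2(n+1)$ to $n+1$. The same local analysis pinpoints $a=m^{-(n+1)}$ (when $p^k=2$) as the unique value at which an additional critical value of $\mathrm{Tr}|_{U_a}$ coalesces with $0$, forcing $\mathcal F$ to be only constructible there.

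\textbf{From the sheaf to the bound.} Once lisseness of rank $\leq n+1$ and purity $\leq n$ are established on the appropriate open subset of $\mathbb G_m$, the trace formula reads
$$\mathrm{EIK}(\mathbb F_q,a)\;+\;(\text{contribution from }R^{>n}\mathrm N_!)\;=\;(-1)^n\mathrm{Tr}\bigl(\mathrm{Frob}_a\mid\mathcal F_a\bigr),$$
and the right-hand side is bounded in absolute value by $\mathrm{rank}(\mathcal F)\cdot q^{n/2}\leq(n+1)q^{n/2}$. The higher-degree piece reduces via Poincar\'e duality on $U_a$ to the geometric monodromy invariants of $\mathcal G$; these invariants vanish unless $\chi|_{\mathbf N^{-1}(1)}=1$, in which case they contribute exactly $-\chi(\mathbf N^{-1}(a))\,|B^*|/(q(q-1))$, matching the trivial term asserted in the theorem.

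\textbf{Main obstacle.} The hardest part is the Swan/drop calculation underlying the rank bound. One must track how the arithmetic condition $p\mid(n+1)$ and the descent of $\psi$ through the absolute trace $\mathrm{Tr}_{\mathbb F_q/\mathbb F_p}$ together cut the wild contribution along the compactifying divisor in the fibres of $\mathrm N$, and to see why the residual ``missing lisseness'' point in the characteristic-$2$ case lies precisely at $a=m^{-(n+1)}$ and nowhere else; this mirrors the classical phenomenon that exotic Kloosterman sheaves in characteristic dividing the rank acquire an extra finite singularity controlled by the non-tameness of the norm map.
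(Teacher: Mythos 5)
Your proposal correctly identifies the target phenomena—the rank collapse from $2(n+1)$ to $n+1$, the mixedness of weights $\le n$, the special role of $a=m^{-(n+1)}$ when $p^k=2$, and the provenance of the trivial term from a boundary stratum via $\chi|_{\mathbf N^{-1}(1)}$—but the central step is asserted rather than proved, and the asserted mechanism is not the one that actually makes it work.

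You reduce the rank bound to computing $\chi_c$ of a fibre $U_a\subset\mathbb G_m^{n+1}$ with coefficients in $\mathcal L_\chi\otimes\iota^*\mathcal L_\psi$ ``by a local Swan/total-drop analysis at the boundary divisors of a smooth compactification,'' and then declare that when $p\mid(n+1)$ and $\psi$ factors through $\mathrm{Tr}_{\mathbb F_q/\mathbb F_p}$ a ``Witt-vector Artin--Schreier equation whose break is strictly smaller'' yields the drop to $n+1$. This is precisely where the proposal has a gap: no such computation is exhibited, and in the wild regime $p\mid(n+1)$ the naive Newton-polytope/toric bookkeeping breaks down because the Laurent polynomial governing the exponential sum becomes degenerate, so the Adolphson--Sperber/Denef--Loeser style formulas you implicitly invoke simply do not apply as stated. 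The paper's actual route is different in kind. After the toric reduction to a Laurent polynomial and a rescaling of the form
\begin{equation*}
(x_i,y,z,w)\longmapsto (x_i z^{-1},\,y,\,y^{-1}z,\,w),
\end{equation*}
it absorbs a factor of the radiciel Frobenius $x_i\mapsto x_i^{p^k}$ (harmless for \'etale cohomology) and then uses the elementary Artin--Schreier identity ``difference $=h^p-h$'' to replace the degenerate Laurent polynomial by a genuinely nondegenerate one $\hat F$. One can then read off the rank and weights from the GKZ hypergeometric sheaf theory and the concrete Newton polytope of $\hat F$, whose normalized volume is $n+1$ when $n+1\ge 2m$. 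That exchange of a degenerate model for a nondegenerate one is the idea your proposal lacks; without it, ``a direct count'' is not available.

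Two further points would not survive scrutiny. First, your explanation of the exceptional value $a=m^{-(n+1)}$ as ``a critical value of $\mathrm{Tr}|_{U_a}$ coalescing with $0$'' is not what degenerates: after the change of variables, the obstruction is that the system $\partial\hat f/\partial x_i=0$ acquires a nonzero solution precisely at $w=m^{-(n+1)}$ when $n+1=2m$, i.e.\ $\hat f_\delta$ fails nondegeneracy on the unique top-dimensional facet not containing the origin. Second, identifying the trivial term via ``Poincar\'e duality and geometric monodromy invariants'' skips the computation; the paper instead isolates the $z=0$ stratum of the auxiliary integral, reduces it to $R\mathbf N_!\mathcal K_\chi$, and evaluates that complex directly (it vanishes when $\chi|_{\mathbf N^{-1}(1)}\ne 1$ and otherwise equals $\mathcal K_{\chi_0}\otimes R\mathbf N_!\overline{\mathbb Q}_\ell$), which is a cleaner and verifiable account of the term $\chi(\mathbf N^{-1}(a))\,|B^*|/(q(q-1))$.
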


\begin{remark} (i) In the case $n=1$, Theorem \ref{thm0} and Theorem \ref{thm00}  reduce to Theorem 1 and Theorem 2 in Katz \cite{K}.

(ii) In the exceptional case $p=2$, $n \equiv 1 \mod 4$ and $a=m^{-(n+1)}=(\frac{n+1}{2})^{-(n+1)}$, the situation becomes
degenerate and our toric method does not apply. For example, if $p=2$, $n=1$ and $B=\mathbb F_q \times \mathbb F_q$, then the condition  $a=m^{-(n+1)}$ becomes $a=1$. In this case, as noted by Katz in \cite{K}, for $x \in \mathbb F_q^*$, we have 
$$ \frac{1}{x +\frac{1}{x}} = \frac{x}{x^2+1} = \frac{1}{x+1} - \frac{1}{(x+1)^2},$$
which is of the form $f^p-f$. Thus the exotic inverted Kloosterman sum $\mathrm{EIK}(\mathbb F_q, a)$ for $a=1$ 
is given by 
$$\mathrm{EIK}(\mathbb F_q, 1) = \sum_{x \in \mathbb F_q -\{0, 1\}} (\chi_1\chi_2^{-1}) (x),$$
which is trivial, where the two characters $\chi_i$ are the restriction of $\chi$ to the two factors of $B^* = \mathbb F_q^* \times \mathbb F_q^*$. 
For this reason, the exceptional case $a=1$ is also removed from the 
theorem in \cite{K}.   

(iii) The assumption $\psi=\psi_0\circ\mathrm{Tr}_{\mathbb F_q/\mathbb F_p}$ is for normalization only. 
Each non-trivial additive character $\psi'$ of $\mathbb F_q$ is of the form $\psi'(x) =\psi (bx)$ for a unique $b \in \mathbb F_q^*$. 
The exotic inverted Kloosterman sum associated to the new additive character $\psi'$ is then given by 
\begin{eqnarray*}
\sum_{\substack{x\in B^*\\ \mathrm{Tr}_{B/\mathbb F_q}(x)\not =0\\
\mathrm{N}_{B/\mathbb F_q}(x)=a}}
\chi(x)\psi\Big(\frac{b}{\mathrm{Tr}_{B/\mathbb F_q}(x)}\Big) = \chi(b)\cdot \mathrm{EIK}(\mathbb F_q, ab^{-(n+1)}), \ \ a\in \mathbb F_q^*.
\end{eqnarray*}
This is obtained by the change of variable $x \rightarrow bx$. Thus, without loss of generality, we can always assume that 
$\psi=\psi_0\circ\mathrm{Tr}_{\mathbb F_q/\mathbb F_p}$.  
\end{remark}

To state the full cohomological version of our result and the construction of the exotic inverted Kloosterman 
sheaf, we need to recall some background information.  

\subsection{Restriction of scalars of the additive and the multiplicative group schemes}
Let $\mathbb G_{a, B}=\mathrm{Spec}\, B[t]$ (resp. $\mathbb G_{m, B}=\mathrm{Spec}\, B[t, 1/t]$) be the additive 
(resp. multiplicative) group scheme over $B$, and let $\mathrm{Res}_{B/\mathbb F_q}(\mathbb G_{a, B})$ (resp. 
$\mathrm{Res}_{B/\mathbb F_q}(\mathbb G_{m, B})$) be its Weil restriction of scalars. It is the commutative group
scheme over $\mathbb F_q$ representing the functor 
$$A\mapsto B\otimes_{\mathbb F_q} A \quad (\hbox{resp. } A\mapsto (B\otimes_{\mathbb F_q} A)^*)$$
from the category of $\mathbb F_q$-algebras to the category of abelian groups. 
Consider the map
$$\mathrm{Tr}_{B\otimes_{\mathbb F_q} A/A}: B\otimes_{\mathbb F_q} A\to A
\quad (\hbox{resp. }\mathrm N_{B\otimes_{\mathbb F_q} A/A}: (B\otimes_{\mathbb F_q} A)^*\to A^*)$$ 
sending every $x\in B\otimes_{\mathbb F_q} A$ to the trace (resp. determinant) of the $A$-linear map 
on the free $A$-module $B\otimes_{\mathbb F_q} A$ defined by the multiplications by $x$. 
It is functorial in $A$, and defines a homomorphism of group schemes
$$\mathbf{Tr}: \mathrm{Res}_{B/\mathbb F_q}(\mathbb G_{a, B})\to \mathbb G_{a, \mathbb F_q}\quad 
(\hbox{resp. } \mathbf{N}: \mathrm{Res}_{B/\mathbb F_q}(\mathbb G_{m, B})\to \mathbb G_{m, \mathbb F_q}).$$ 

\subsection{The Lang sheaf} Let $G$ be a commutative algebraic group over $\mathbb F_q$. The \emph{Lang isogeny} is the 
the \'etale finite homomorphism of algebraic groups
$$L: G\to G, \quad x\mapsto Fx \cdot x^{-1},$$
where $F:G\to G$ is the Frobenius correspondence. The kernel of $L$ is the group $G(\mathbb F_q)$ of $\mathbb F_q$-points 
on $G$. The Lang isogeny is a $G(\mathbb F_q)$-torsor over $G$, which we call the \emph{Lang torsor}.
Let $\rho: G(\mathbb F_q)\to \overline{\mathbb Q}_\ell^*$ be a character. Pushing forward the Lang torsor by $\rho^{-1}$, we get a
lisse $\overline{\mathbb Q}_\ell$-sheaf $\mathcal L_\rho$ on $G$, which we call the \emph{Lang sheaf}. We have a homomorphism 
of groups 
$$\mathrm{N}_{\mathbb F_{q^m}/\mathbb F_q}: G(\mathbb F_{q^m})\to G(\mathbb F_q),\quad x\mapsto x \cdot F(x)\cdots F^{m-1}(x).$$
For any $x\in G(\mathbb F_{q^m})$, let $F_x\in\mathrm{Gal}(\overline{\mathbb F}_{q}/\mathbb F_{q^m})$ be the geometric Frobenius.
We have
$$\mathrm{Tr}(F_x, \mathcal L_{\rho, \bar x})=\rho(\mathrm{N}_{\mathbb F_{q^m}/\mathbb F_q}(x)).$$ 
Moreover, we have an isomorphism $$\mathcal L_\rho|_{G\otimes_{\mathbb F_q}\mathbb F_{q^m}}
\cong \mathcal L_{\rho\circ \mathrm{N}_{\mathbb F_{q^m}/\mathbb F_q}},$$
where  $\mathcal L_{\rho\circ \mathrm{N}_{\mathbb F_{q^m}/\mathbb F_q}}$ is the Lang sheaf on $G\otimes_{\mathbb F_q}\mathbb F_{q^m}$
associated to the character $\rho\circ \mathrm{N}_{\mathbb F_{q^m}/\mathbb F_q}: G(\mathbb F_{q^m})\to \overline{\mathbb Q}_\ell^*$. 
For details, confer \cite[Sommes trig. 1.4-1.8]{SGA4.5}.

The Lang sheaf $\mathcal L_\psi$ on $\mathbb G_{a, \mathbb F_q}$ associated to the character $\psi$ is called the 
\emph{Artin-Schreier sheaf}. Denote by $\mathcal K_\chi$ the Lang sheaf on 
$\mathrm{Res}_{B/\mathbb F_q}(\mathbb G_{m, B})$ associated to the character 
$$\mathrm{Res}_{B/\mathbb F_q}(\mathbb G_{m, B})(\mathbb F_q)\cong B^*\stackrel \chi\to \overline{\mathbb Q}_\ell^*.$$  
For any $x\in \mathrm{Res}_{B/\mathbb F_q}(\mathbb G_{m, B})(\mathbb F_{q^m})\cong (B\otimes_{\mathbb F_q}\mathbb F_{q^m})^*$, 
we have
\begin{eqnarray*}
\mathrm{Tr}(F_x, \mathcal K_{\chi, \bar x})=\chi(\mathrm{N}_{B\otimes_{\mathbb F_q} \mathbb F_{q^m}/
B}(x)).
\end{eqnarray*}
Confer \ref{norm} for an explanation of this fact.

\subsection{The exotic inverted Kloosterman complex}
Given $a\in\mathbb F^*_{q^m}\cong 
\mathbb G_{m,\mathbb F_q}(\mathbb F_{q^m})$, we are interested in the exotic inverted Kloosterman sum
$$\mathrm{EIK}(\mathbb F_{q^m},a):=\sum_{\substack{x\in (B\otimes_{\mathbb F_q}\mathbb F_{q^m})^* \\
\mathrm{Tr}_{B\otimes_{\mathbb F_q}\mathbb F_{q^m}/\mathbb F_{q^m}}(x)\not=0\\
\mathrm N_{B\otimes_{\mathbb F_q}\mathbb F_{q^m}/\mathbb F_{q^m}}(x)=a}}
\chi(\mathrm N_{B\otimes_{\mathbb F_q} \mathbb F_{q^m}/
B}(x)) \psi\Big(\mathrm{Tr}_{\mathbb F_{q^m}/\mathbb F_q}\Big(\frac{1}{\mathrm{Tr}_{B\otimes_{\mathbb F_q} \mathbb F_{q^m}/
\mathbb F_{q^m}}(x)}\Big)\Big).$$
The map on $\mathbb F_{q^m}$-points
$$\mathbf{Tr}: \mathrm{Res}_{B/\mathbb F_q}(\mathbb G_{a, B})(\mathbb F_{q^m})\to \mathbb G_{a,\mathbb F_q}(\mathbb F_{q^m}),
\quad  \mathbf N:
\mathrm{Res}_{B/\mathbb F_q}(\mathbb G_{m, B})(\mathbb F_{q^m})\to \mathbb G_{m,\mathbb F_q}(\mathbb F_{q^m})$$ 
induced by the morphisms $\mathbf{Tr}: \mathrm{Res}_{B/\mathbb F_q}(\mathbb G_{a, B})\to \mathbb G_{a,\mathbb F_q}$ 
and $\mathbf N: \mathrm{Res}_{B/\mathbb F_q}(\mathbb G_{m, B})\to \mathbb G_{m,\mathbb F_q}$
can be identified with the maps
$$\mathrm{Tr}_{B\otimes_{\mathbb F_q}\mathbb F_{q^m}/\mathbb F_{q^m}}: 
B\otimes_{\mathbb F_q}\mathbb F_{q^m} \to \mathbb F_{q^m}, 
\quad \mathrm N_{B\otimes_{\mathbb F_q}\mathbb F_{q^m}/\mathbb F_{q^m}}: 
(B\otimes_{\mathbb F_q}\mathbb F_{q^m})^*\to \mathbb F_{q^m}^\ast.$$
Let $U=\mathrm{Res}_{B/\mathbb F_q}(\mathbb G_{m,B})-\mathbf{Tr}^{-1}(0)$. Then we have a morphism 
$$1/\mathbf{Tr}: U\to \mathbb G_{a, \mathbb F_q}.$$ 
Define the \emph{exotic inverted Kloosterman complex} to be the object
$$\mathrm{EIK}:=R\mathbf N|_{U,!} \Big(\mathcal K_\chi|_U\otimes (1/\mathbf{Tr})^*\mathcal L_\psi\Big)$$
in the triangulated category $D_c^b(\mathbb G_{m, \mathbb F_q}, \overline{\mathbb Q}_\ell)$
of complexes of $\overline{\mathbb Q}_\ell$-sheaves defined in \cite[1.1.3]{D}.
By the Grothendieck trace formula (\cite[Rapport 3.2]{SGA4.5}), we have the following.

\begin{proposition} Notation as above. For any $a\in \mathbb F^*_{q^m}\cong 
\mathbb G_{m,\mathbb F_q}(\mathbb F_{q^m})$, we have 
$$\mathrm{Tr}(F_a, \mathrm{EIK}_{\bar a})=\mathrm{EIK}(\mathbb F_{q^m},a).$$
\end{proposition}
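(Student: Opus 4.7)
The plan is to derive the identity as a straightforward consequence of the Grothendieck--Lefschetz trace formula (\cite[Rapport 3.2]{SGA4.5}) combined with proper base change, unwinding the definition of the exotic inverted Kloosterman complex.

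First, I would apply proper base change to the cartesian square obtained by pulling back $\mathbf N|_U:U\to\mathbb G_{m,\mathbb F_q}$ along the point $a\hookrightarrow \mathbb G_{m,\mathbb F_q}$. This identifies the stalk
$$\mathrm{EIK}_{\bar a}\cong R\Gamma_c\Bigl(U_{\bar a},\,\mathcal K_\chi|_U\otimes (1/\mathbf{Tr})^*\mathcal L_\psi\,\bigr|_{U_{\bar a}}\Bigr),$$
where $U_{\bar a}=(\mathbf N|_U)^{-1}(\bar a)$ is the geometric fiber. Taking the alternating sum of traces of $F_a$ on the cohomology groups on the right and applying the Grothendieck--Lefschetz trace formula to this fiber (which is separated and of finite type over $\mathbb F_{q^m}$), one obtains
$$\mathrm{Tr}(F_a,\mathrm{EIK}_{\bar a})=\sum_{x\in U_a(\mathbb F_{q^m})}\mathrm{Tr}\bigl(F_x,\bigl(\mathcal K_\chi|_U\otimes (1/\mathbf{Tr})^*\mathcal L_\psi\bigr)_{\bar x}\bigr).$$

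Next I would identify the index set. By the description of $\mathbf N$ and $\mathbf{Tr}$ on $\mathbb F_{q^m}$-points recalled in the preceding subsection, the set $U_a(\mathbb F_{q^m})$ is exactly
$$\bigl\{x\in(B\otimes_{\mathbb F_q}\mathbb F_{q^m})^*:\ \mathrm N_{B\otimes\mathbb F_{q^m}/\mathbb F_{q^m}}(x)=a,\ \mathrm{Tr}_{B\otimes\mathbb F_{q^m}/\mathbb F_{q^m}}(x)\neq 0\bigr\},$$
which is precisely the range of summation defining $\mathrm{EIK}(\mathbb F_{q^m},a)$.

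Finally I would compute the local Frobenius traces termwise. The trace of $F_x$ on $\mathcal K_{\chi,\bar x}$ equals $\chi(\mathrm N_{B\otimes\mathbb F_{q^m}/B}(x))$ by the formula recalled for $\mathcal K_\chi$, while the trace of $F_x$ on $(1/\mathbf{Tr})^*\mathcal L_{\psi,\bar x}=\mathcal L_{\psi,\overline{1/\mathbf{Tr}(x)}}$ equals $\psi\bigl(\mathrm{Tr}_{\mathbb F_{q^m}/\mathbb F_q}(1/\mathrm{Tr}_{B\otimes\mathbb F_{q^m}/\mathbb F_{q^m}}(x))\bigr)$ by the trace formula for the Artin--Schreier sheaf (the ``norm'' map on $\mathbb G_a$ being the field trace). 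Multiplicativity of the trace on a tensor product of lisse sheaves shows that each summand equals the corresponding term of $\mathrm{EIK}(\mathbb F_{q^m},a)$, and summation gives the claim. The only real subtlety is bookkeeping in checking that the local trace of $\mathcal K_\chi$ on $\mathbb F_{q^m}$-points of the Weil restriction reproduces $\chi(\mathrm N_{B\otimes\mathbb F_{q^m}/B}(\cdot))$, but this is exactly the content of the formula recalled in the preceding subsection (and will be reviewed in \ref{norm}), so no new input is required.
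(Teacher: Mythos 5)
Your proof is correct and takes essentially the same approach as the paper: the paper merely cites the Grothendieck trace formula from SGA~$4\frac{1}{2}$, Rapport~3.2, and you have written out the standard unwinding of that citation (proper base change to reduce to compactly supported cohomology of the fiber, the Lefschetz trace formula on that fiber, and termwise evaluation of the local Frobenius traces of $\mathcal K_\chi$ and $(1/\mathbf{Tr})^*\mathcal L_\psi$ via the Lang-sheaf formulas recalled in the preceding subsections). Nothing is missing.
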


The main result of this paper is the following.

\begin{theorem}\label{thm} Let $n+1=\mathrm{dim}_{\mathbb F_q} B\geq 2$. In the triangulated category $D_c^b(\mathbb G_{m, \mathbb F_q}, \overline{\mathbb Q}_\ell)$, we have a 
distinguished triangle 
$$\mathcal H[-n]\to \mathrm{EIK} \to R\mathbf N_!\mathcal K_\chi(1)[1]\to$$
and $R\mathbf N_!\mathcal K_\chi$ and $\mathcal H$ have the following properties:

\begin{enumerate}[(i)]
\item If the restriction of $\chi$ to 
${\bf N}^{-1}(1)$ is nontrivial, then $R\mathbf N_!\mathcal K_\chi=0$. 
If the restriction of $\chi$ to 
${\bf N}^{-1}(1)$ is trivial, then there exists 
a character $\chi_0: \mathbb F_q^*\to \overline{\mathbb Q}_\ell^*$ such that $\chi=\chi_0\circ \mathrm N_{B/\mathbb F_q}$, and 
we have $R\mathbf N_!\mathcal K_\chi=\mathcal K_{\chi_0}\otimes R\mathbf N_!\overline{\mathbb Q}_\ell$, 
where $\mathcal K_{\chi_0}$ is the Kummer sheaf (i.e. the Lang sheaf) on $\mathbb G_{m, \mathbb F_q}$ associated to the 
character $\chi_0$. 
\item If $(p, n+1)=1$, then $\mathcal H$ is a mixed lisse $\overline{\mathbb Q}_\ell$-sheaf on $\mathbb G_{m, \mathbb F_q}$ of rank $2n+2$ 
with weights $\leq n$.
\item Suppose $p|(n+1)$ and $\psi=\psi_0\circ\mathrm{Tr}_{\mathbb F_q/\mathbb F_p}$ for some non-trivial additive 
character $\psi_0$ of $\mathbb F_p$. Write $n+1=p^km\geq 2m$ such that $(p,m)=1$. 
\begin{enumerate}[(1)]
\item If $n+1>2m$, then $\mathcal H$ is a mixed lisse $\overline{\mathbb Q}_\ell$-sheaf  on 
$\mathbb G_{m, \mathbb F_q}$ of rank $n+1$ with weights $\leq n$.
\item If $n+1=2m$ (namely, $p=2$ and $n \equiv 1 \mod 4$), then $\mathcal H|_{\mathbb G_{m, \mathbb F_q}-\{m^{-(n+1)}\}}$ is a mixed lisse $\overline{\mathbb Q}_\ell$-sheaf  on 
$\mathbb G_{m, \mathbb F_q}-\{m^{-(n+1)}\}$ of rank $n+1$ with weights $\leq n$.
\end{enumerate}
\end{enumerate}
\end{theorem}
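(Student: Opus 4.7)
I would follow the three-fold structure of the theorem.

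\textbf{(1) The triangle.} Because $\iota:\mathbb G_m\to\mathbb G_m$, $t\mapsto 1/t$, is an isomorphism, $(1/\mathbf{Tr})^*\mathcal L_\psi=(\mathbf{Tr}|_U)^*\iota^*\mathcal L_\psi$; setting $j_0:\mathbb G_m\hookrightarrow\mathbb A^1$ and $\widetilde{\mathcal L}:=j_{0,!}\iota^*\mathcal L_\psi$, base change (through the Cartesian square cut out by $\mathbf{Tr}^{-1}(0)\subset X$) combined with the projection formula gives
\[
\mathrm{EIK}=R\mathbf N_!(\mathcal K_\chi\otimes\mathbf{Tr}^*\widetilde{\mathcal L}).
\]
On $\mathbb A^1$ the adjunction triangle $\widetilde{\mathcal L}\to Rj_{0,*}\iota^*\mathcal L_\psi\to i_{0,*}\Phi\to$, with $\Phi:=i_0^*Rj_{0,*}\iota^*\mathcal L_\psi$, has $\Phi$ concentrated in degree one and of rank one: $\iota^*\mathcal L_\psi$ is unramified at $\infty$ and has Swan conductor $1$ at $0$, so the local Euler--Poincar\'e formula forces $\dim\Phi^1=1$ and the residue identification gives $\Phi\simeq\overline{\mathbb Q}_\ell(-1)[-1]$. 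Pulling this triangle back by $\mathbf{Tr}$, tensoring with $\mathcal K_\chi$, and applying $R\mathbf N_!$ yields a triangle whose boundary term sits over $Z:=X\cap\mathbf{Tr}^{-1}(0)$; rewriting it via Poincar\'e--Lefschetz duality for the codimension-one inclusion $Z\hookrightarrow X$ together with the smoothness of $\mathbf{Tr}$ as $R\mathbf N_!\mathcal K_\chi(1)[1]$, and \emph{defining} $\mathcal H$ as the shifted cone, gives the triangle of the theorem.

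\textbf{(2) The two terms.} Part (i) follows from the Lang-sheaf formalism: $\mathbf N$ is a surjective homomorphism of algebraic groups whose fibers are torsors under the norm-one subtorus $K:=\mathbf N^{-1}(1)$, and $\mathcal K_\chi$ restricts on each such fiber to the Lang sheaf for $\chi|_{K(\mathbb F_q)}$. The compactly-supported cohomology of a torus with a nontrivial Lang sheaf vanishes, so $R\mathbf N_!\mathcal K_\chi=0$ when $\chi|_K\neq1$; when $\chi|_K=1$, Hilbert~90 supplies the unique $\chi_0$ with $\chi=\chi_0\circ\mathrm N_{B/\mathbb F_q}$, whence $\mathcal K_\chi\cong\mathbf N^*\mathcal K_{\chi_0}$ and the projection formula delivers the stated formula. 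For (ii) and (iii), the stalk $\mathcal H_{\bar a}$ is, modulo the summand absorbed by (i), the degree-$n$ compactly-supported cohomology of $\mathbf N^{-1}(a)\cap U$ with coefficients in $\mathcal K_\chi\otimes(1/\mathbf{Tr})^*\mathcal L_\psi$; since the coefficient sheaf is pointwise pure of weight $0$, Deligne's Weil~II gives the weight bound $\le n$ at once. For the rank and lisseness I would fix an $\mathbb F_q$-basis of $B$ and, following Lin--Wan's split analysis, eliminate the norm constraint to reduce to a toric exponential sum. The Adolphson--Sperber--Katz non-degeneracy criterion for the associated Newton polyhedron applies when $(p,n+1)=1$ and returns rank $2(n+1)$ as the volume contribution of two simplicial facets (a ``classical'' plus an ``inverted'' exotic-Kloosterman contribution); when $p\mid n+1$ one facet collapses because the trace monomial becomes a $p$-th power on the polytope, halving the rank to $n+1$; the residual degeneration at $a=m^{-(n+1)}$ when $p=2$ and $n+1=2m$ accounts for the failure of lisseness at this single parameter.

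\textbf{Main obstacle.} The subtlest step is the identification of the right-hand vertex as $R\mathbf N_!\mathcal K_\chi(1)[1]$: the naive excision boundary is the $Z$-supported complex $R(\mathbf N|_Z)_!\mathcal K_\chi|_Z(-1)[-1]$, and bridging it to the global pushforward $R\mathbf N_!\mathcal K_\chi(1)[1]$ requires combining the $\overline{\mathbb Q}_\ell(-1)$ residue at $0$ with relative duality for the smooth fibration $\mathbf{Tr}:X\to\mathbb A^1$ and a careful shift-and-twist bookkeeping. A second substantive point is the wild-ramification computation pinning down the locus of non-lisseness of $\mathcal H$: in case (iii.2) the argument must show that the \emph{only} bad fiber is at $a=m^{-(n+1)}$, which reprises --- and generalizes to all $n$ --- the obstruction identified by Katz in his $n=1$ analysis of the exotic Sali\'e sum.
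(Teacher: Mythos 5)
Your construction of the distinguished triangle contains a computational error that breaks the argument. You set $\Phi:=i_0^*Rj_{0,*}\iota^*\mathcal L_\psi$ and claim $\Phi\simeq\overline{\mathbb Q}_\ell(-1)[-1]$, invoking a local Euler--Poincar\'e count. In fact $\Phi=0$. The sheaf $\iota^*\mathcal L_\psi=\mathcal L_{\psi(1/t)}$ is of rank one and totally wildly ramified at $t=0$ (Swan conductor $1$, wild inertia $P$ acting nontrivially). Hence $(\iota^*\mathcal L_\psi)^P=0$, so the stalk $R^0$ vanishes; and since $P$ is pro-$p$ and the coefficients are a $\overline{\mathbb Q}_\ell$-vector space, Hochschild--Serre gives $H^1(I_0,\iota^*\mathcal L_\psi)\cong H^1(I_0^{\mathrm t},(\iota^*\mathcal L_\psi)^P)=0$ as well. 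Consequently $j_{0,!}\iota^*\mathcal L_\psi\xrightarrow{\ \sim\ }Rj_{0,*}\iota^*\mathcal L_\psi$; your adjunction triangle degenerates to an isomorphism and produces no decomposition at all. Your ``main obstacle'' paragraph already hints at the tension (a $Z$-supported complex does not match $R\mathbf N_!\mathcal K_\chi(1)[1]$), but the difficulty is not one of duality bookkeeping: the term $R\mathbf N_!\mathcal K_\chi$ simply does not arise from the boundary $\mathbf{Tr}^{-1}(0)$.

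What is actually needed --- and what the paper does in Lemma~\ref{lemma1} --- is a Fourier-theoretic resolution of the inversion. One introduces an auxiliary variable $z\in\mathbb G_a$ and the function $f(x,y,z)=y+z-yz\,\mathbf{Tr}(x)$, and uses the ``delta-function'' isomorphism $Rp_{1!}\langle\,,\,\rangle^*\mathcal L_\psi\cong 0_*\overline{\mathbb Q}_\ell(-1)[-2]$ to show
$\mathrm{EIK}\cong R(\mathbf Np_1)_!(p_1^*\mathcal K_\chi\otimes f^*\mathcal L_\psi)(1)[2]$.
The distinguished triangle of the theorem then comes from the open/closed decomposition of the $z$-line into $\{z=0\}$ and $\{z\neq 0\}$ of that auxiliary space: the closed piece produces $R\mathbf N_!\mathcal K_\chi[-1]$ after contracting the $y$-variable via $R\Gamma_c(\mathbb G_m,\mathcal L_\psi)\cong\overline{\mathbb Q}_\ell[-1]$, and the open piece gives the GKZ-type complex controlling $\mathcal H$. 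This mechanism is qualitatively different from an excision triangle over $\mathbf{Tr}^{-1}(0)$, and it is the step your proposal is missing. Your part (i) (Lang torsor along the norm fibration, Hilbert~90, projection formula) does agree with the paper's Lemma~\ref{lemma2}, and the Weil~II weight bound is the right idea for (ii)--(iii). However, for the wild case $p\mid(n+1)$ the statement that ``one facet collapses because the trace monomial becomes a $p$-th power'' glosses over the essential maneuver: after a change of variables, the paper pulls back along the Frobenius $w\mapsto w^{p^k}$ (a universal homeomorphism, harmless on \'etale cohomology), simplifies the Artin--Schreier twist by an Artin--Schreier equivalence $f'''-\hat f=h^p-h$, and lands on a \emph{non-degenerate} Laurent polynomial $\hat F$ to which the GKZ theory applies (Lemma~\ref{lemma4} and Lemma~\ref{p|n+1}). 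Without that reduction, the Newton-polytope criterion does not apply directly and neither the rank $n+1$ nor the isolated degeneracy at $a=m^{-(n+1)}$ can be justified.
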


We may call $\mathcal H$ in Theorem \ref{thm} the \emph{exotic inverted Kloosterman sheaf}. 

\medskip
Theorems \ref{thm0} and \ref{thm00} are immediate consequence of Theorem \ref{thm} and the trace formula. 
The condition $\chi|_{{\bf N}^{-1}(1)}=1$ can be made completely explicit as follows. 
Recall that a finite  \'{e}tale algebra $B$ over $\mathbb F_q$ of degree $n+1$ can be written in the form 
\begin{align}\label{eq}
B = \mathbb F_{q^{n_1}} \times \cdots \times \mathbb F_{q^{n_k}}, \quad n_1+\cdots + n_k = n+1.
\end{align}
The factorization type $\{n_1, \cdots, n_k\}$ of $B$ is uniquely determined up to permutation. 
For each $1\leq i \leq k$, let $\chi_i$ denote the restriction of $\chi$ to the $i$-th component $\mathbb F^*_{q^{n_i}}$ in  
the factorization (\ref{eq}). For any $x=(x_1, \cdots, x_k) \in B^*$, we have 
$$\chi(x) = \chi_1(x_1)\cdots \chi_k(x_k), \quad \psi ({\rm Tr}(x)) =\psi( {\rm Tr}_{\mathbb F_{q^{n_i}}/\mathbb F_q}
(x_1)+\cdots +{\rm Tr}_{\mathbb F_{q^{n_i}}/\mathbb F_q}(x_k)).$$
The exotic inverted Kloosterman sum becomes 
\begin{align}\label{sum}
\mathrm{EIK}(\mathbb F_q, a)=\sum_{\substack{x_i\in \mathbb F_{q^{n_i}}^*\\ 
\sum_{i=1}^k\mathrm{Tr}_{\mathbb F_{q^{n_i}}/\mathbb F_q}(x_i)\not =0\\
\prod_{i=1}^k \mathrm{N}_{\mathbb F_{q^{n_i}}/\mathbb F_q}(x_i)=a}}
\chi_1(x_1)\cdots \chi_k(x_k)\psi\Big(\frac{1}{\sum_{i=1}^k\mathrm{Tr}_{\mathbb F_{q^{n_i}}/\mathbb F_q}(x_i)}\Big).
\end{align}

\begin{proposition} Notation as above. We have 
$\chi|_{{\bf N}^{-1}(1)}=1$ if and only if there exists a character $\chi_0$ of $\mathbb F^*_q$ such that
$\chi_i= \chi_0 \circ {\rm N}_{\mathbb F_{q^{n_i}}/\mathbb F_q}$ for all $1\leq i\leq k$.
\end{proposition}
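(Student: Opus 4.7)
The plan is to use the product decomposition $B^* = \prod_{i=1}^k \mathbb F_{q^{n_i}}^*$ together with the surjectivity of each component norm map to reduce the statement to a direct calculation on standard generators.

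First I would handle the easy direction: if $\chi_i = \chi_0 \circ {\rm N}_{n_i}$ for every $i$, then for $x=(x_1,\dots,x_k) \in B^*$ one has
\[
\chi(x) = \prod_{i=1}^k \chi_0\bigl({\rm N}_{n_i}(x_i)\bigr) = \chi_0\Bigl(\prod_{i=1}^k {\rm N}_{n_i}(x_i)\Bigr) = \chi_0\bigl({\rm N}_{B/\mathbb F_q}(x)\bigr),
\]
which is plainly trivial when ${\rm N}_{B/\mathbb F_q}(x)=1$.

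For the converse, I would first note that the norm map ${\rm N}_{B/\mathbb F_q}: B^* \to \mathbb F_q^*$ is surjective, because each partial norm map ${\rm N}_{n_i}: \mathbb F_{q^{n_i}}^* \to \mathbb F_q^*$ is already surjective (a standard fact for finite fields) and one can realize any target value using only the first component while setting the others to $1$. Hence $B^*/{\bf N}^{-1}(1) \cong \mathbb F_q^*$ via ${\rm N}_{B/\mathbb F_q}$. Since $\chi$ is assumed trivial on ${\bf N}^{-1}(1)$, it descends to a character on this quotient, i.e.\ there is a unique character $\chi_0: \mathbb F_q^* \to \overline{\mathbb Q}_\ell^*$ with $\chi = \chi_0 \circ {\rm N}_{B/\mathbb F_q}$.

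To identify $\chi_0$ component-wise, I would evaluate $\chi$ on the distinguished elements $e_i(y) := (1,\dots,1,y,1,\dots,1)$ with $y \in \mathbb F_{q^{n_i}}^*$ placed in the $i$-th slot. On the one hand $\chi(e_i(y)) = \chi_i(y)$, while on the other hand ${\rm N}_{B/\mathbb F_q}(e_i(y)) = {\rm N}_{n_i}(y)$, so
\[
\chi_i(y) = \chi_0\bigl({\rm N}_{n_i}(y)\bigr) \quad \text{for all } y \in \mathbb F_{q^{n_i}}^*,
\]
giving the desired factorization for every $i$. There is no real obstacle here: the argument is entirely formal once one has surjectivity of the field-theoretic norm and the product structure of $B^*$.
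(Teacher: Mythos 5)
Your proof is correct and follows essentially the same route as the paper: both use the surjectivity of the norm map to identify $B^*/\mathbf N^{-1}(1)$ with $\mathbb F_q^*$, descend $\chi$ to a character $\chi_0$, and then read off the component-wise condition $\chi_i = \chi_0 \circ \mathrm N_{n_i}$ from the product decomposition of $B^*$. The only cosmetic difference is that you make the final step explicit by evaluating on the elements $e_i(y)$, whereas the paper states the equivalence directly.
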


\begin{proof}
The homomorphism 
$$\mathrm N_{B/\mathbb F_q}: B^*= \mathbb F_{q^{n_1}}^* \times \cdots \times \mathbb F_{q^{n_k}}^*\to\mathbb F^*_q ,\quad
(x_1, \ldots, x_n)\mapsto \prod_{i=1}^k \mathrm N_{\mathbb F_{q^{n_i}}/\mathbb F_q}(x_i)$$ is surjective with kernel 
$\mathbf N^{-1}(1)$. We have $\chi|_{{\bf N}^{-1}(1)}=1$ if and only if there exists 
a character $\chi_0$ of $\mathbb F^*_q$ such that
$\chi= \chi_0 \circ \mathrm N_{B/\mathbb F_q}$, that is, 
$$\prod_{i=1}^m \chi_i(x_i)=\prod_{i=1}^m \chi_0(\mathrm N_{\mathbb F_{q^{n_i}}/\mathbb F_q}(x_i)).$$
The last condition is equivalent to 
$\chi_i=\chi_0 \circ {\rm N}_{\mathbb F_{q^{n_i}}/\mathbb F_q}$ for all $\leq i \leq k$.
\end{proof}

To prove Theorem \ref{thm}, we first recall the properties of the restriction of scalars in section $2$. 
This will reduce the problem to the split case. Then we prove Theorem \ref{thm} in section $3$  by
a cohomological version of the toric reduction in \cite{LW} and applying 
the theory of $\ell$-adic GKZ hypergeometric sheaf developed in \cite{F}.  This works well in the case $(n+1, p)=1$ 
as the associated Laurent polynomial is non-degenerate. In the case $(n+1, p)>1$, the situation is more 
subtle as the Laurent polynomial is degenerate. A key new ingredient is a further reduction so that the 
final Laurent polynomial becomes non-degenerate 
and thus the theory of GKZ hypergeometric sheaf can still apply.

\subsection*{Acknowledgements}
LF is supported by 2021YFA1000700, 2023YFA1009703 and NSFC12171261.

\section{Restriction of scalars}

Let $S$ be a scheme. Denote by $(\mathbf {Sch}_S)$ the category of $S$-schemes. 
For any $S$-scheme $T$, let $$\eta_T=\mathrm{Hom}_S(-, T)$$ be the functor represented by $T$. 
Let $h:S'\to S$ be a morphism of schemes. For any functor $F':(\mathbf{Sch}_{S'})^\circ\to (\hbox{Sets})$, 
let $h_*F': (\mathbf{Sch}_S)^\circ\to (\hbox{Sets})$ be the 
functor $$(h_*F')(T)=F'(T\times_S S').$$ 
Note that for any $S'$-scheme $X'$,  $h_*\eta_{X'}$ is the functor $T\mapsto \mathrm{Hom}_{S'}(T\times_S S', X')$.

\begin{proposition} \label{res} Let $h:S'\to S$ be a finite flat morphism. Suppose $X'$ is an $S'$-scheme such 
that for any $S$-scheme $\mathrm{Spec}\, K$ with $K$ being a field, the image of any $S'$-morphism
$\mathrm{Spec}\,K\times_S S'\to X'$ is contained in an affine open subset of $X'$. 

(i) The functor
$$h_*\eta_{X'}:(\mathbf{Sch}_S)^\circ \to (\mathbf{Sets}),\quad T\mapsto \mathrm{Hom}_{S'}(T\times_S S', X')$$
is representable by an $S$-scheme $\mathrm{Res}_{S'/S}(X')$:
$$\mathrm{Hom}_{S'}(T\times_S S', X')\cong \mathrm{Hom}_{S}(T, \mathrm{Res}_{S'/S}(X')).$$

(ii) Let $T$ be an $S$-scheme and let $T'=T\times_S S'$. We have
$$\mathrm{Res}_{S'/S}(X')\times_S T\cong \mathrm{Res}_{T'/T}(X'\times_{S'}T').$$
\end{proposition}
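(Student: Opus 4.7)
The plan is to prove (i) first in the affine case by an explicit algebraic construction, then extend to general $X'$ by gluing using the hypothesis on field-valued points, and finally to deduce (ii) as a purely formal consequence of Yoneda's lemma.

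For the affine case of (i), I would take $S=\mathrm{Spec}\, R$, $S'=\mathrm{Spec}\, R'$ with $R'$ finite locally free over $R$, and $X'=\mathrm{Spec}\, A'$ with a presentation $A'=R'[t_1,\ldots,t_n]/(f_1,\ldots,f_m)$. Passing to a Zariski cover of $S$ over which $R'$ is free with basis $e_1,\ldots,e_d$, I would introduce $nd$ indeterminates $T_{ij}$, substitute $t_i=\sum_j T_{ij}e_j$ in each $f_k$, and expand in the basis to obtain $md$ polynomials $g_{k\ell}\in R[T_{ij}]$. Then $\mathrm{Res}_{S'/S}(X'):=\mathrm{Spec}\, R[T_{ij}]/(g_{k\ell})$ satisfies the universal property: giving an $R$-algebra map out of this ring into $\Gamma(T,\mathcal O_T)$ amounts to choosing values for the $T_{ij}$, which by the identification $A'\otimes_R\Gamma(T,\mathcal O_T)\cong A'\otimes_{R'}(R'\otimes_R\Gamma(T,\mathcal O_T))$ is the same as an $R'$-algebra map $A'\to R'\otimes_R\Gamma(T,\mathcal O_T)$. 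Naturality in the choice of basis lets this construction descend from the trivialising Zariski cover to a well-defined affine $S$-scheme in the general finite locally free case.

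For general $X'$ satisfying the hypothesis, I would choose an affine open cover $\{U'_\alpha\}$ of $X'$ and glue the already-constructed $\mathrm{Res}_{S'/S}(U'_\alpha)$. The key technical point is that for any open $V'\subseteq U'_\alpha$, the subfunctor of $h_*\eta_{U'_\alpha}$ consisting of morphisms factoring through $V'$ is representable by an open subscheme: writing $Y=\mathrm{Res}_{S'/S}(U'_\alpha)$ and letting $\mathrm{ev}\colon Y\times_S S'\to U'_\alpha$ be the universal evaluation, the desired open is the complement in $Y$ of the image, under the finite (hence proper) projection $Y\times_S S'\to Y$, of the closed set $\mathrm{ev}^{-1}(U'_\alpha\setminus V')$. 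The affine-cover hypothesis on field-valued points of $X'$ is precisely what ensures that these open subfunctors exhaust $h_*\eta_{X'}$, so that the glued scheme represents the whole functor. I expect this gluing step to be the main obstacle, since one must verify both that the transition maps are open immersions satisfying the cocycle condition and that the hypothesis indeed yields an honest Zariski cover of $h_*\eta_{X'}$.

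Part (ii) is then a formal consequence of Yoneda's lemma. For any $T$-scheme $U$, write $T'=T\times_S S'$ and note the canonical identification $U\times_T T'=U\times_S S'$. Then
\begin{align*}
\mathrm{Hom}_T(U,\mathrm{Res}_{S'/S}(X')\times_S T)
&\cong \mathrm{Hom}_S(U,\mathrm{Res}_{S'/S}(X'))\\
&\cong \mathrm{Hom}_{S'}(U\times_S S',X')\\
&\cong \mathrm{Hom}_{T'}(U\times_T T',X'\times_{S'}T')\\
&\cong \mathrm{Hom}_T(U,\mathrm{Res}_{T'/T}(X'\times_{S'}T')),
\end{align*}
where the third bijection is the universal property of the fiber product $X'\times_{S'}T'$ applied to the $T'$-scheme $U\times_T T'$. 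These identifications are natural in $U$, so Yoneda yields the required isomorphism of $S$-schemes.
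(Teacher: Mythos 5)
Your proof of part~(ii) is the same Yoneda/adjunction chain as in the paper, merely written in the reverse direction (the paper starts from $\mathrm{Hom}_T(Q,\mathrm{Res}_{T'/T}(X'\times_{S'}T'))$ and unwinds it; you start from $\mathrm{Hom}_T(U,\mathrm{Res}_{S'/S}(X')\times_S T)$), so there is nothing to compare there. For part~(i), the paper does not give a proof at all: it simply cites Bosch--L\"utkebohmert--Raynaud, \emph{N\'eron Models}, 7.6, Theorem~4. What you have written is a correct reconstruction of the argument contained in that reference: represent the affine case explicitly by expanding in a local basis of $R'$ over $R$ and reading off equations, check it is a Zariski sheaf so that the affine computation determines the functor of points on all $T$, then glue the affine pieces by showing that ``the universal section lands in an open $V'\subseteq U'_\alpha$'' cuts out an open subscheme of $\mathrm{Res}_{S'/S}(U'_\alpha)$ --- this is where finiteness (hence properness) of $S'\to S$ is used, to make the complement of $p\bigl(\mathrm{ev}^{-1}(U'_\alpha\setminus V')\bigr)$ open --- and finally invoke the hypothesis on field-valued points to see that these open subfunctors cover. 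Your reconstruction is sound; the one remark worth adding is that the affine step as you phrase it (``choosing values for the $T_{ij}$'' in $\Gamma(T,\mathcal O_T)$) literally establishes the universal property only for $T$ affine, and one should say explicitly that both $h_*\eta_{X'}$ and $\eta_{\mathrm{Res}}$ are Zariski sheaves so that this suffices. Net difference from the paper: the authors chose to outsource~(i) to the standard reference, whereas you supply the proof, gaining self-containedness at the cost of length; for a paper whose real content is elsewhere, the citation is the economical choice.
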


We call the functor $\mathrm{Res}_{S'/S}:(\mathbf{Sch}_{S'})\to(\mathbf{Sch}_S)$ \emph{restriction of scalars}. 
It is right adjoint to the base change functor $T\mapsto T\times_S S'$.

\begin{proof} ${}$

(i) Confer \cite[7.6 Theorem 4]{BLM}.

(ii) For any $T$-scheme $Q$, let $Q'=Q\times_SS'$. We have 
\begin{align*}
\mathrm{Hom}_T(Q,  \mathrm{Res}_{T'/T}(X'\times_{S'}T'))\cong\,& \mathrm{Hom}_{T'}(Q', X'\times_{S'}T')\\
\cong\,& \mathrm{Hom}_{S'}(Q', X')\\
\cong\,& \mathrm{Hom}_{S}(Q, \mathrm{Res}_{S'/S}(X'))\\
\cong\,&\mathrm{Hom}_{T}(Q, \mathrm{Res}_{S'/S}(X')\times_S T).\qedhere
\end{align*}
\end{proof} 

If $G'$ is a group scheme over $S'$, then $h_*\eta_{G'}$ is a functor from $(\mathbf{Sch}_{S})$ to the category 
of groups. Thus $\mathrm{Res}_{S'/S}(G')$ is a group scheme over $S$.  

\subsection{}\label{split} Consider the case where $S=\mathrm{Spec}\,R$
and $S'=\mathrm{Spec}\,R'$ are affine such that $R'$ is a free $R$-module of finite rank. Denote $\mathrm{Res}_{S'/S}$ by
$\mathrm{Res}_{R'/R}$. Consider the restriction of scalars of the additive group scheme $\mathbb G_{a,R'}$ and the multiplicative group scheme
$\mathbb G_{m,R'}$ over $\mathrm{Spec}\,R'$. As schemes, we have an open immersion  
$$\mathrm{Res}_{R'/R}(\mathbb G_{m,R'})\hookrightarrow \mathrm{Res}_{R'/R}(\mathbb G_{a,R'}).$$ 
For any $R$-algebra $A$, and any $R$-scheme $X$, let
$X(A)=\mathrm{Hom}_R(\mathrm{Spec}A, X)$. We have 
$$(\mathrm{Res}_{R'/R}(\mathbb G_{a, R'}))(A)\cong A\otimes_R R', \quad (\mathrm{Res}_{R'/R}(\mathbb G_{m, R'}))(A)\cong 
(A\otimes_R R')^*.$$
We have homomorphisms of groups
$$\mathrm{Tr}_{A\otimes_R R'/A}: A\otimes_R R'\to A, \quad \mathrm N_{A\otimes_R R'/A}: (A\otimes_R R')^*\to A^*$$ 
defined by the trace and determinant of the endomorphism defined by the multiplication on the free $A$-module 
$A\otimes_R R'$, and they are functorial in $A$.
They define homomorphisms of group schemes 
$$\mathbf{Tr}: \mathrm{Res}_{R'/R}(\mathbb G_{a, R'})\to \mathbb G_{a, R},\quad 
\mathbf{N}: \mathrm{Res}_{R'/R}(\mathbb G_{m, R'})\to \mathbb G_{m, R}.$$ 

\subsection{}\label{free} In the split case where $R'=R^n$, we have 
$$(\mathrm{Res}_{R^n/R}(\mathbb G_{a, R^n}))(A)\cong A^n, \quad (\mathrm{Res}_{R^n/R}(\mathbb G_{a, R'}))(A)\cong 
A^{*,n}.$$
We thus have 
$$\mathrm{Res}_{R^n/R}(\mathbb G_{a, R^n})\cong\mathbb  G_{a, R}^n, \quad
\mathrm{Res}_{R^n/R}(\mathbb G_{m, R^n})\cong\mathbb  G_{m, R}^n.$$
The morphism 
$\mathbf{Tr}: \mathrm{Res}_{R^n/R}(\mathbb G_{a, R^n})\to \mathbb G_{a, R}$ (resp. 
$\mathbf{N}: \mathrm{Res}_{R^n/R}(\mathbb G_{m, R^n})\to \mathbb G_{m, R}$) is identified with 
\begin{eqnarray*}
&&\mathbf{Tr}: \mathbb G^n_{a, R}\to \mathbb G_{a, R},\quad (x_1, \ldots, x_n)\mapsto x_1+\cdots+ x_n\\
(\hbox{resp.} &&\mathbf N:\mathbb G^n_{m, R}\to \mathbb G_{m, R},\quad (x_1, \ldots, x_n)\mapsto x_1\cdots x_n).
\end{eqnarray*}

\subsection{}\label{field} Suppose $R=k$ is a field and $R'=k'$ is a finite separable extension of $k$. Choose $\xi\in k'$ such that 
$k'=k[\xi]$. Let $f(t)\in k[t]$ be the minimal polynomial of $\xi$ over $k$, $\tilde k$ an extension of $k'$ containing all 
the roots 
$\xi_1=\xi, \ldots, \xi_n$ of $f(t)$, and 
$\sigma_i: k'\to \tilde k$ the $k$-homomorphism mapping $\xi$ to $\xi_i$.
We have 
$$\tilde k\otimes_k k'\cong \tilde k\otimes_k k[t]/(f(t))\cong \tilde k[t]/(f(t))\cong \tilde k^n.$$
The canonical homomorphism
$$ \tilde k\to \tilde k\otimes_k k'  \quad (\hbox{resp. } k'\to \tilde k\otimes_k k')$$
is identified with the homomorphism 
\begin{eqnarray*}
&&\tilde k\to \tilde k^n,\quad x\mapsto (x, \ldots, x)\\
(\hbox{resp.} &&k'\to \tilde k^n,\quad x\mapsto (\sigma_1(x), \ldots, \sigma_n(x))).
\end{eqnarray*}
By Proposition \ref{res} (ii) and \ref{free}, we have
\begin{eqnarray*}
\mathrm{Res}_{k'/k}(\mathbb G_{a, k'})\otimes_k \tilde k\cong \mathbb G^n_{a, \tilde k},\quad
\mathrm{Res}_{k'/k}(\mathbb G_{m, k'})\otimes_k \tilde k\cong \mathbb G^n_{m, \tilde k}.
\end{eqnarray*}
and the base change to $\tilde k$ of the morphism 
$\mathbf{Tr}: \mathrm{Res}_{k'/k}(\mathbb G_{a, k'})\to \mathbb  G_{a, k}$ (resp. 
$\mathbf{N}: \mathrm{Res}_{k'/k}(\mathbb G_{m, k'})\to \mathbb G_{m, k}$) is identified with 
\begin{eqnarray*}
&&\mathbf{Tr}: \mathbb G^n_{a, \tilde k}\to \mathbb G_{a, \tilde k},\quad (x_1, \ldots, x_n)\mapsto x_1+\cdots+ x_n\\
(\hbox{resp.} &&\mathbf N:\mathbb G^n_{m, \tilde k}\to \mathbb G_{m, \tilde k},\quad (x_1, \ldots, x_n)\mapsto x_1\cdots x_n).
\end{eqnarray*}
The above results hold if $k'$ is a finite \'etale $k$-algebra and $\tilde k$ is a sufficiently large finite extension in 
the separable closure of $k$ so that $\tilde k\otimes_k k'$ splits into a product of finitely many copies of $\tilde k$.

\subsection{}\label{finitefield} Take $k=\mathbb F_q$. For any scheme $X$ over $\mathbb F_q$, 
let $F:X\to X$ be the Frobenius correspondence. It is the identity on $X$ as a map of set, and is the morphism  $s\mapsto s^q$ on the structure 
sheaf $\mathcal O_X$. For any $\mathbb F_{q^m}$-points $x:\mathrm{Spec}\,\mathbb F_{q^m}\to X$, we have a commutative diagram
$$\begin{array}{ccc}
\mathrm{Spec}\,\mathbb F_{q^m}&\stackrel{x} \to& X\\
{\scriptstyle F}\downarrow\quad&&\quad\downarrow {\scriptstyle F}\\
\mathrm{Spec}\,\mathbb F_{q^m}&\stackrel{x} \to& X,
\end{array}$$ where the Frobenius correspondence $F$ on $\mathrm{Spec}\,\mathbb F_{q^m}$ is induced by the Frobenius substitution
$$\phi: \mathbb F_{q^m}\to  \mathbb F_{q^m},\quad a\mapsto a^q.$$
So the $\mathbb F_{q^m}$-points $F(x)$ coincides with $x\circ F$. 

\subsection{}\label{norm} Let $B$ be a finite \'etale $\mathbb F_q$-algebra. We have 
$$\mathrm{Res}_{B/\mathbb F_q}(\mathbb G_{a, B})(\mathbb F_{q^m})=B\otimes_{\mathbb F_q} \mathbb F_{q^m},\quad
\mathrm{Res}_{B/\mathbb F_q}(\mathbb G_{m, B})(\mathbb F_{q^m})=(B\otimes_{\mathbb F_q} \mathbb F_{q^m})^*.$$
By \ref{finitefield}, the map on $\mathrm{Res}_{B/\mathbb F_q}(\mathbb G_{a, B})(\mathbb F_{q^m})$ (resp. 
$\mathrm{Res}_{B/\mathbb F_q}(\mathbb G_{m, B})(\mathbb F_{q^m})$) induced by 
the Frobenius correspondence can be identified with the map on $B\otimes_{\mathbb F_q} \mathbb F_{q^m}$
(resp. $(B\otimes_{\mathbb F_q} \mathbb F_{q^m})^*$) induced by $\mathrm{id}_B\otimes \phi$. For simplicity, we denote 
$\mathrm{id}_B\otimes\phi$ by $a\mapsto a^\phi$. The map
\begin{eqnarray*}
\mathrm{Tr}_{\mathbb F_{q^m}/\mathbb F_q}: \mathrm{Res}_{B/\mathbb F_q}(\mathbb G_{a, B})(\mathbb F_{q^m})\to 
\mathrm{Res}_{B/\mathbb F_q}(\mathbb G_{a, B})(\mathbb F_q),&\quad& x\mapsto x+Fx+\cdots +F^{m-1}x,\\
\mathrm N_{\mathbb F_{q^m}/\mathbb F_q}: \mathrm{Res}_{B/\mathbb F_q}(\mathbb G_{m, B})(\mathbb F_{q^m})\to 
\mathrm{Res}_{B/\mathbb F_q}(\mathbb G_{m, B})(\mathbb F_q),&\quad& x\mapsto x\cdot Fx\cdots \cdot F^{m-1}x
\end{eqnarray*}
can be identified with 
\begin{eqnarray*}
\mathrm{Tr}_{B\otimes_{\mathbb F_q} \mathbb F_{q^m}/
B}:B\otimes_{\mathbb F_q} \mathbb F_{q^m}\to 
B,&\quad& x\mapsto x+x^\phi+\cdots +x^{\phi^{m-1}},\\
\mathrm N_{B\otimes_{\mathbb F_q} \mathbb F_{q^m}/ 
B}: (B\otimes_{\mathbb F_q} \mathbb F_{q^m})^*\to 
B^*,&\quad& x\mapsto x\cdot x^\phi\cdots \cdot x^{\phi^{m-1}}
\end{eqnarray*}
Note that they are the trace and determinant of the endomorphism defined by the multiplication by $x$
on the free $B$-module $B\otimes_{\mathbb F_q} \mathbb F_{q^m}$. 

\begin{remark} Let's describe explicitly the morphism $N_{B\otimes_{\mathbb F_q}\mathbb F_{q^m}/B}$ in the 
case where $B=\mathbb F_{q^n}$ and $n|m$. In this case, $B\otimes_{\mathbb F_q}\mathbb F_{q^m}\cong \mathbb F_{q^m}^n$.
Fix an embedding $i: \mathbb F_{q^n}\hookrightarrow \mathbb F_{q^m}$. Then 
$i, i\phi, \ldots, i\phi^{n-1}$ are all the $\mathbb F_q$-embedding of $\mathbb F_{q^n}$ into $\mathbb F_{q^m}$. 
By \ref{field}, the Cartesian diagram 
$$\begin{tikzcd}
B\arrow[r]&B\otimes_{\mathbb F_q}\mathbb F_{q^m}\\
\mathbb F_q\arrow[u]\arrow [r]&\mathbb F_{q^m} \arrow[u]
\end{tikzcd}$$
can be identified with
$$\begin{tikzcd}
\mathbb F_{q^n}\arrow[r, "\Phi"]&\overbrace{\mathbb F_{q^m}\times\cdots \times \mathbb F_{q^m}}^{n\hbox{ copies}}\\
\mathbb F_q\arrow[u]\arrow [r]&\mathbb F_{q^m} \arrow[u,"\Psi"],
\end{tikzcd}$$
where 
$$\Phi(x)=(i(x), i(\phi^{-1}(x)), \ldots, i(\phi^{-(n-1)}(x))),\quad \Psi(x)=(x, \ldots, x).$$
The map $N_{B\otimes_{\mathbb F_q}\mathbb F_{q^m}/B}: B\otimes_{\mathbb F_q}\mathbb F_{q^m}\to B$ 
is identified with $N_{\Phi}: \mathbb F_{q^m}^n\to \mathbb F_{q^n}$, where 
\begin{eqnarray*}
N_\Phi(x_1, \ldots, x_n)&=&\prod_{j=1}^n N_\Phi(1, \ldots, 1, x_j, 1, \ldots, 1)\\
&=& \prod_{j=1}^n \phi^{j-1}(N_{\mathbb F_{q^m/\mathbb F_{q^n}}}(x_j))=\prod_{j=1}^nN_{\mathbb F_{q^m/\mathbb F_{q^n}}}(x_j)^{q^{j-1}}.
\end{eqnarray*}

\end{remark}

\section{Proof of the theorem}

For convenience, denote $\mathrm{Res}_{B/\mathbb F_q}$ by $\mathrm{Res}$. 
Let $f$ be the morphism 
$$f:\mathrm{Res}(\mathbb G_{m, B})\times \mathbb G_{m, \mathbb F_q} \times \mathbb G_{a, \mathbb F_q}\to \mathbb A^1, 
\quad (x, y, z)\mapsto y+z-y z\mathbf{Tr}(x),$$ 
and let $p_1$ and $p_2$ be the projections of $
\mathrm{Res}(\mathbb G_{m, B})\times \mathbb G_{m, \mathbb F_q} \times \mathbb G_{a, \mathbb F_q}$ to its first two factors. 

\begin{lemma}\label{lemma1} We have 
$$\mathrm{EIK}\cong R(\mathbf Np_1)_!(p_1^*\mathcal K_\chi \otimes f^*\mathcal L_\psi)(1)[2].$$ 
\end{lemma}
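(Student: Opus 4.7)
The plan is to compute the pushforward $R(p_1)_!(f^*\mathcal L_\psi)$ explicitly, using that $f(x,y,z)=y+z(1-y\mathbf{Tr}(x))$ is affine linear in $z$. Once this identifies $R(p_1)_!(f^*\mathcal L_\psi)$ with $j_!\bigl((1/\mathbf{Tr})^*\mathcal L_\psi\bigr)(-1)[-2]$, where $j\colon U\hookrightarrow \mathrm{Res}(\mathbb G_{m,B})$ is the open immersion, the remainder is formal: apply the projection formula for $p_1$ and $\mathcal K_\chi$, then $R\mathbf N_!$, and use $R\mathbf N_!\circ j_!=R(\mathbf N|_U)_!$ to recover $\mathrm{EIK}(-1)[-2]$, which after Tate-twisting by $(1)$ and shifting by $[2]$ gives the lemma.

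To carry out the computation, I would factor $p_1$ as $\pi\circ\pi_{12}$, where $\pi_{12}$ is the projection $\mathrm{Res}(\mathbb G_{m,B})\times\mathbb G_{m,\mathbb F_q}\times\mathbb G_{a,\mathbb F_q}\to W:=\mathrm{Res}(\mathbb G_{m,B})\times\mathbb G_{m,\mathbb F_q}$ forgetting $z$, and $\pi\colon W\to \mathrm{Res}(\mathbb G_{m,B})$ is the projection. Let $g(x,y)=1-y\mathbf{Tr}(x)$ and $s(x,y)=y$, both viewed as morphisms $W\to \mathbb A^1$. Then $f^*\mathcal L_\psi$ factors as $\pi_{12}^*\mathcal L_\psi(s)\otimes \mathcal L_\psi(zg)$, so by the projection formula along $\pi_{12}$,
$$R\pi_{12,!}(f^*\mathcal L_\psi)\cong \mathcal L_\psi(s)\otimes R\pi_{12,!}\mathcal L_\psi(zg).$$

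The key step is to show that $R\pi_{12,!}\mathcal L_\psi(zg)\cong i_*\overline{\mathbb Q}_\ell(-1)[-2]$, where $i\colon Z\hookrightarrow W$ is the closed subscheme $Z=\{(x,y):y\mathbf{Tr}(x)=1\}$. By proper base change, the stalk of this complex at $w\in W$ equals $R\Gamma_c(\mathbb A^1,\mathcal L_\psi(z\cdot g(w)))$; this vanishes when $g(w)\neq 0$ because the pulled-back sheaf is then a nontrivial Artin--Schreier sheaf on $\mathbb A^1$, and equals $\overline{\mathbb Q}_\ell(-1)[-2]$ when $g(w)=0$. Since $Z$ is the graph of the morphism $U\to \mathbb G_{m,\mathbb F_q}$, $x\mapsto 1/\mathbf{Tr}(x)$, the composition $\pi\circ i\colon Z\to \mathrm{Res}(\mathbb G_{m,B})$ factors as an isomorphism $Z\xrightarrow{\sim} U$ followed by $j$, under which $s|_Z$ corresponds to $1/\mathbf{Tr}$. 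Pushing forward by $\pi$ then yields $R(p_1)_!(f^*\mathcal L_\psi)\cong j_!\bigl((1/\mathbf{Tr})^*\mathcal L_\psi\bigr)(-1)[-2]$, after which the projection formula for $\mathcal K_\chi$ and $p_1$ followed by composition with $R\mathbf N_!$ finishes the proof as outlined.

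The main obstacle is promoting the stalk-wise identification of $R\pi_{12,!}\mathcal L_\psi(zg)$ to an isomorphism in the derived category. This requires checking that a constructible complex with vanishing stalks on the open complement $W\setminus Z$ is the $*$-extension of its restriction to $Z$, so that the global complex is determined by its restriction to $Z$, which proper base change identifies with the constant complex $\overline{\mathbb Q}_\ell(-1)[-2]$.
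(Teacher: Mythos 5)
Your proposal follows essentially the same route as the paper: use the projection formula to separate out the $z$-integration, recognize that integrating $\mathcal L_\psi(zg)$ over the fiber $\mathbb G_a$ concentrates the answer on the locus $Z=\{y\mathbf{Tr}(x)=1\}$ by Artin--Schreier orthogonality, identify $Z$ with $U$ via the graph of $1/\mathbf{Tr}$, then apply the projection formula for $\mathcal K_\chi$ and push forward along $\mathbf N$. The one point you flag as an obstacle --- upgrading the stalkwise vanishing/identification of $R\pi_{12,!}\mathcal L_\psi(zg)$ to an actual isomorphism with $i_*\overline{\mathbb Q}_\ell(-1)[-2]$ in the derived category --- is exactly what the paper resolves by a cleaner global argument, and you should not try to fix it by a ``vanishing on the open complement implies $*$-extension'' lemma (which would require controlling the cohomology sheaves and their gluing data). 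Instead: observe that $zg$ is the pullback of the multiplication pairing $\langle\,,\,\rangle\colon\mathbb G_{a}\times\mathbb G_{a}\to\mathbb G_{a}$, $(w,w')\mapsto ww'$, along $h\times\mathrm{id}$, where $h(x,y)=1-y\mathbf{Tr}(x)$. The square relating $\pi_{12}$ to the first projection $p_1\colon\mathbb G_a\times\mathbb G_a\to\mathbb G_a$ is Cartesian, so proper base change gives $R\pi_{12,!}\mathcal L_\psi(zg)\cong h^*\,Rp_{1!}\langle\,,\,\rangle^*\mathcal L_\psi$. One then invokes the single global fact that the Fourier transform of the constant sheaf is a skyscraper at the origin, $Rp_{1!}\langle\,,\,\rangle^*\mathcal L_\psi\cong 0_*\overline{\mathbb Q}_\ell(-1)[-2]$ (Laumon \cite[1.2.2.2]{L}), and pulling this back along $h$ --- using that the square defining $Z=h^{-1}(0)$ is also Cartesian --- produces the desired $i_*\overline{\mathbb Q}_\ell(-1)[-2]$ directly, with no separate extension argument needed. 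With that modification your proof is complete and coincides with the paper's.
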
 

\begin{proof} Let $g$ be the morphism 
$$g:\mathrm{Res}(\mathbb G_{m, B})\times \mathbb G_{m, \mathbb F_q} \times \mathbb G_{a, \mathbb F_q}\to \mathbb A^1, 
\quad (x, y, z)\mapsto z(1-y\mathbf{Tr}(x)),$$
let $$\pi: \mathrm{Res}(\mathbb G_{m, B})\times \mathbb G_{m, \mathbb F_q} \times \mathbb G_{a, \mathbb F_q}\to  
\mathrm{Res}(\mathbb G_{m, B})\times \mathbb G_{m, \mathbb F_q}, \quad (x, y, z)\mapsto (x, y)$$
be the projection, and let $q_1$ and $q_2$ be the projections  of $\mathrm{Res}(\mathbb G_{m, B})\times \mathbb G_{m, \mathbb F_q}$ 
to its factors. By the projection formula, we have 
\begin{eqnarray}\label{1}
R(\mathbf Np_1)_!(p_1^*\mathcal K_\chi \otimes f^*\mathcal L_\psi)
&\cong& R(\mathbf N q_1\pi)_!(\pi^*q_1^*\mathcal K_\chi \otimes \pi^* q_2^*\mathcal L_\psi\otimes g^*\mathcal L_\psi)\nonumber\\
&\cong& R(\mathbf Nq_1)_!(q_1^*\mathcal K_\chi \otimes q_2^*\mathcal L_\psi\otimes R\pi_!g^*\mathcal L_\psi).
\end{eqnarray}
Recall that $U=\mathrm{Res}(\mathbb G_{m,B})-\mathbf{Tr}^{-1}(0)$. 
We have a closed immersion $$k: U\to \mathrm{Res}(\mathbb G_{m, B})\times \mathbb G_{m, \mathbb F_q},\quad 
x\mapsto (x, 1/\mathbf{Tr}(x)).$$ 
Let $h$ be the morphism 
$$h: \mathrm{Res}(\mathbb G_{m, B})\times \mathbb G_{m, \mathbb F_q}\to \mathbb G_{a, \mathbb F_q}, \quad (x, y)\mapsto 1-y\mathbf{Tr}(x),$$ 
and let $\langle\, , \,\rangle$ be the pairing 
$$\langle\,,\, \rangle: \mathbb G_{a, \mathbb F_q} \times \mathbb G_{a, \mathbb F_q}\to  \mathbb G_{a, \mathbb F_q},\quad (w, w')\mapsto ww'.$$
We have a commutative diagram
$$
\begin{tikzcd}
 \mathrm{Res}(\mathbb G_{m, B})\times \mathbb G_{m, \mathbb F_q}\times \mathbb G_{a, \mathbb F_q} \arrow[rr, bend left,"g"]
 \arrow[d,"\pi"] \arrow[r,"h\times{\mathrm{id}}"]
  &\mathbb G_{a, \mathbb F_q}\times \mathbb G_{a, \mathbb F_q}  \arrow[d,"\mathrm{pr}_1"]
  \arrow[r,"\langle\, {,} \,\rangle"]& \mathbb G_{a, \mathbb F_q}\\
 \mathrm{Res}(\mathbb G_{m, B})\times \mathbb G_{m, \mathbb F_q}\arrow[r,"h"]&\mathbb G_{a, \mathbb F_q}& \\
 U\arrow[u,"k"]\arrow [r]&{0}\arrow[u]& 
 \end{tikzcd}
 $$ where $0$ is the zero section of $\mathbb G_{a, \mathbb F_q}$, 
 $\mathrm{pr}_1: \mathbb G_{a, \mathbb F_q}\times \mathbb G_{a, \mathbb F_q}\to 
 \mathbb G_{a, \mathbb F_q}$ is the projection to the first factor, and all squares are Cartesian.
 By \cite[1.2.2.2]{L}, we have
$$R\mathrm{pr}_{1!} \langle\,,\,\rangle^*\mathcal L_\psi\cong 0_*\overline{\mathbb Q}_\ell(-1)[-2].$$ 
By the proper base change theorem, we have 
\begin{eqnarray*}
R\pi_! g^*\mathcal L_\psi&\cong & R\pi_! (h\times\mathrm{id})^*\langle\,,\,\rangle^*\mathcal L_\psi
\cong h^* R\mathrm{pr}_{1!} \langle\,,\,\rangle^*\mathcal L_\psi\\&\cong& h^* 0_*\overline{\mathbb Q}_\ell(-1)[-2]
\cong k_*\overline{\mathbb Q}_\ell(-1)[-2].
\end{eqnarray*}
Combined with (\ref{1}), we get
\begin{align*}
R(\mathbf Np_1)_!(p_1^*\mathcal K_\chi \otimes f^*\mathcal L_\psi)
\cong\,& R(\mathbf Nq_1)_!(q_1^*\mathcal K_\chi \otimes q_2^*\mathcal L_\psi\otimes k_*\overline{\mathbb Q}_\ell(-1)[-2])\\
\cong\,& R(\mathbf Nq_1 k)_!(k^*q_1^*\mathcal K_\chi \otimes k^*q_2^*\mathcal L_\psi)(-1)[-2]\\
\cong\,& R\mathbf N|_{U,!} \Big(\mathcal K_\chi|_U\otimes (1/\mathbf{Tr})^*\mathcal L_\psi\Big)(-1)[-2]=\mathrm{EIK}(-1)[-2].\qedhere
\end{align*}
\end{proof}

Let $i$ be the closed immersion
$$i: \mathrm{Res}(\mathbb G_{m, B})\times \mathbb G_{m, \mathbb F_q}\to
\mathrm{Res}(\mathbb G_{m, B})\times \mathbb G_{m, \mathbb F_q} \times \mathbb G_{a, \mathbb F_q}, \quad (x, y)\mapsto (x, y,0),$$ and let 
$j$ be the complement open immersion
$$j: \mathrm{Res}(\mathbb G_{m, B})\times \mathbb G_{m, \mathbb F_q} \times \mathbb G_{m, \mathbb F_q}\hookrightarrow
\mathrm{Res}(\mathbb G_{m, B})\times \mathbb G_{m, \mathbb F_q} \times \mathbb G_{a, \mathbb F_q}.$$
We have  a distinguished triangle 
\begin{eqnarray}\label{triangle}
&&\\
&&R(\mathbf Np_1j)_!(j^*p_1^*\mathcal K_\chi \otimes j^*f^*\mathcal L_\psi)
\to R(\mathbf Np_1)_!(p_1^*\mathcal K_\chi \otimes f^*\mathcal L_\psi)
\to R(\mathbf Np_1i)_!(i^*p_1^*\mathcal K_\chi \otimes i^*f^*\mathcal L_\psi)\to\nonumber.
\end{eqnarray}

\begin{lemma}\label{lemma2} 
\begin{enumerate}[(i)]
\item We have 
$$R(\mathbf Np_1i)_!(i^*p_1^*\mathcal K_\chi \otimes i^*f^*\mathcal L_\psi)\cong R\mathbf N_! \mathcal K_\chi[-1].$$

\item If the restriction of $\chi: B^*\to \overline{\mathbb Q}_\ell^*$ to
${\bf N}^{-1}(1)=\{x\in B^*: \mathrm N_{B/\mathbb F_q}(x)=1\}$  is nontrivial, then $R\mathbf N_! \mathcal K_\chi=0$.

\item Suppose the restriction of $\chi$ to 
${\bf N}^{-1}(1)$ is trivial. Let $\chi_0: \mathbb F_q^*\to \overline{\mathbb Q}_\ell^*$ be the character 
such that $\chi=\chi_0\circ \mathrm N_{B/\mathbb F_q}$. Then 
we have $R\mathbf N_!\mathcal K_\chi=\mathcal K_{\chi_0}\otimes R\mathbf N_!\overline{\mathbb Q}_\ell.$ 
\end{enumerate}
\end{lemma}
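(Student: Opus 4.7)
For part (i), the approach is a short diagram chase combined with a standard cohomology computation. Observe that $i(x,y) = (x,y,0)$ gives $p_1 \circ i = q_1$, the projection to $\mathrm{Res}(\mathbb G_{m,B})$, while $f \circ i(x,y) = y$, so the pullback $i^*p_1^*\mathcal K_\chi \otimes i^*f^*\mathcal L_\psi$ identifies with $q_1^*\mathcal K_\chi \otimes q_2^*\mathcal L_\psi$ on $\mathrm{Res}(\mathbb G_{m,B}) \times \mathbb G_{m,\mathbb F_q}$, where $q_2$ is the other projection. Factoring $\mathbf N p_1 i = \mathbf N \circ q_1$ and invoking the projection formula reduces the statement to computing $Rq_{1,!}\, q_2^*\mathcal L_\psi$, which is the constant complex with value $R\Gamma_c(\mathbb G_{m,\overline{\mathbb F}_q}, \mathcal L_\psi|_{\mathbb G_m}) \cong \overline{\mathbb Q}_\ell[-1]$. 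The latter isomorphism follows from the excision triangle against $\mathbb A^1$ together with the vanishing $R\Gamma_c(\mathbb A^1, \mathcal L_\psi) = 0$ for nontrivial $\psi$.

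For parts (ii) and (iii), the structural observation to exploit is that $\mathbf N: \mathrm{Res}_{B/\mathbb F_q}(\mathbb G_{m,B}) \to \mathbb G_{m,\mathbb F_q}$ is a surjective smooth homomorphism of commutative algebraic groups whose kernel $K = \mathbf N^{-1}(1)$ is smooth and geometrically connected: by Section 2, over a splitting extension $K$ becomes the subtorus $\{\prod x_i = 1\} \subset \mathbb G_m^{n+1}$. In particular, $\mathbf N$ realizes $\mathrm{Res}(\mathbb G_{m,B})$ as a $K$-torsor over $\mathbb G_{m,\mathbb F_q}$.

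For (iii), when $\chi|_{K(\mathbb F_q)} = 1$, the surjectivity of $\mathrm N_{B/\mathbb F_q}: B^* \to \mathbb F_q^*$ with kernel $K(\mathbb F_q)$ produces a unique character $\chi_0$ of $\mathbb F_q^*$ with $\chi = \chi_0 \circ \mathrm N_{B/\mathbb F_q}$. By functoriality of the Lang sheaf under the group homomorphism $\mathbf N$, we obtain $\mathcal K_\chi \cong \mathbf N^*\mathcal K_{\chi_0}$, and the projection formula then delivers $R\mathbf N_!\mathcal K_\chi \cong \mathcal K_{\chi_0} \otimes R\mathbf N_!\overline{\mathbb Q}_\ell$.

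For (ii), when $\chi|_{K(\mathbb F_q)}$ is nontrivial, the plan is to compute geometric stalks via proper base change. For a geometric point $\bar a$ of $\mathbb G_{m,\mathbb F_q}$, the fiber $\mathbf N^{-1}(\bar a)$ is a $K_{\overline{\mathbb F}_q}$-torsor, and after choosing a trivialization the restriction of $\mathcal K_\chi$ is identified, up to a one-dimensional twist from the chosen point, with the Lang sheaf on $K_{\overline{\mathbb F}_q}$ attached to $\chi|_{K(\mathbb F_q)}$. The main step, where I would concentrate the care, is the classical vanishing: for a connected commutative algebraic group $K$ over $\mathbb F_q$ and a nontrivial character $\sigma$ of $K(\mathbb F_q)$, one has $R\Gamma_c(K_{\overline{\mathbb F}_q}, \mathcal L_\sigma) = 0$. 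The argument is that the translation action of $K(\mathbb F_q)$ on this cohomology must be both trivial (as $K$ is connected, so all translations induce the identity on cohomology) and scalar multiplication by $\sigma$; the incompatibility forces the cohomology to vanish. Applied fiberwise, this gives $R\mathbf N_!\mathcal K_\chi = 0$.
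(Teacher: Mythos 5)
Your proposal is correct and follows essentially the same route as the paper's own proof: for (i) you pull back along $i$, apply the projection formula, and reduce to $R\Gamma_c(\mathbb G_{m,\overline{\mathbb F}_q},\mathcal L_\psi)\cong\overline{\mathbb Q}_\ell[-1]$; for (ii) you reduce to the vanishing of cohomology of a nontrivial Lang sheaf on the connected kernel $\ker\mathbf N$ (the paper does this via an explicit translation argument to the stalk at $\bar 1$ and then cites SGA $4\frac{1}{2}$, Sommes trig.\ 2.7*, while you sketch the standard proof of that vanishing); for (iii) you use $\mathcal K_\chi\cong\mathbf N^*\mathcal K_{\chi_0}$ plus the projection formula, exactly as in the paper.
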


\begin{proof}  (i) Let $q_1$ and $q_2$ be the projections  of $\mathrm{Res}(\mathbb G_{m, B})\times \mathbb G_{m, \mathbb F_q}$ 
to its factors. We have 
\begin{eqnarray*}
R(\mathbf Np_1i)_!(i^*p_1^*\mathcal K_\chi \otimes i^*f^*\mathcal L_\psi)
&\cong& R(\mathbf Nq_1)_!(q_1^*\mathcal K_\chi \otimes q_2^*(\mathcal L_\psi|_{\mathbb G_{m,\mathbb F_q}}))\\
&\cong& R\mathbf N_!(\mathcal K_\chi \otimes Rq_{1!}q_2^*(\mathcal L_\psi|_{\mathbb G_{m,\mathbb F_q}}))
\cong R\mathbf N_! \mathcal K_\chi[-1].
\end{eqnarray*}
Here we use the fact that $$Rq_{1!}q_2^*(\mathcal L_\psi|_{\mathbb G_{m,\mathbb F_q}})\cong \overline{\mathbb Q}_\ell[-1].$$
This follows from the proper base change theorem, the distinguished triangle 
$$R\Gamma_c(\mathbb G_{m, \overline{\mathbb F}_q}, \mathcal L_\psi)\to 
R\Gamma_c(\mathbb G_{a, \overline{\mathbb F}_q}, \mathcal L_\psi)\to \mathcal L_{\psi, \bar 0}\to.$$
and the fact (\cite[sommes trig., Th\'eor\`eme 2.7*]{SGA4.5})
$$R\Gamma_c(\mathbb G_{a, \overline{\mathbb F}_q}, \mathcal L_\psi)=0.$$

(ii) To prove $R\mathbf N_! \mathcal K_\chi=0$,  
we may make a base change to the $\overline{\mathbb F}_q$. Then for any 
$\mu\in \mathbb G_m(\overline{\mathbb F}_q)$, we may find some $\mu'\in \mathrm{Res}(\mathbb G_{m,B})(\overline{\mathbb F}_q)$ 
such that $\mathbf N(\mu')=\mu$. Denote the base change of $\mathrm{Res}(\mathbb G_{m, B})$ by 
$\mathrm{Res}(\mathbb G_{m, B})_{\overline{\mathbb F}_q}$. We have a commutative diagram 
$$\begin{tikzcd}
\mathrm{Res}(\mathbb G_{m, B})_{\overline{\mathbb F}_q}\arrow[r, "\mu'"]\arrow[d,"\mathbf N"]&
\mathrm{Res}(\mathbb G_{m, B})_{\overline{\mathbb F}_q}\arrow[d, "\mathbf N"]\\
\mathbb G_{m, \overline{\mathbb F}_q}\arrow[r, "\mu"]&
\mathbb G_{m, \overline{\mathbb F}_q},
\end{tikzcd}$$
where the horizontal arrows are the multiplications by $\mu'$ and $\mu$, respectively. It follows that 
\begin{eqnarray}\label{translation}
\mu^*R\mathbf N_! \mathcal K_\chi\cong R\mathbf N_!(\mu'^*\mathcal K_\chi).
\end{eqnarray}
Let $$m, \pi_1, \pi_2: \mathrm{Res}(\mathbb G_{m, B})\times \mathrm{Res}(\mathbb G_{m, B})\to \mathrm{Res}(\mathbb G_{m, B})$$
be the multiplication and the projections. By \cite[1.1.3.2]{L}, we have
$$m^*\mathcal K_\chi\cong \pi_1^*\mathcal K_\chi\otimes \pi_2^* \mathcal K_\chi.$$  
It follows that 
$$\mu'^* \mathcal K_\chi
\cong \mathcal K_{\chi,\bar \mu'} \otimes \mathcal K_\chi.$$
Combined with (\ref{translation}), we get
$$\mu^*R\mathbf N_! \mathcal K_\chi\cong \mathcal K_{\chi,\bar \mu'}\otimes R\mathbf N_!\mathcal K_\chi.$$
We thus have 
$$(R\mathbf N_! \mathcal K_\chi)_{\bar \mu} \cong \mathcal K_{\chi,\bar \mu'}\otimes (R\mathbf N_!\mathcal K_\chi)_{\bar 1}.$$
To prove $RN_!\mathcal K_\chi=0$, it suffices to prove  $(R\mathbf N_!\mathcal K_\chi)_{\bar 1}=0$. 
But 
$$(R\mathbf N_! \mathcal K_\chi)_{\bar 1}\cong R\Gamma_c\Big(\mathrm{ker}(\mathbf N)\otimes{\overline{\mathbb F}_q}, \mathcal K_\chi\Big).$$
Note that $\mathcal K_\chi|_{\mathrm{ker}(\mathbf N)}$ is the Lang sheaf on $\mathrm{ker}(\mathbf N)$ associated to the character
$$\chi: \{x\in B^*:\, N_{B/\mathbb F_q}(x)=1\}\to \overline{\mathbb Q}_\ell^*.$$
Our assertion follows from \cite[sommes trig., Th\'eor\`eme 2.7*]{SGA4.5}.

(iii) We have $\mathcal K_\chi=\mathbf N^*\mathcal K_{\chi_0}$ by \cite[sommes trig., (1.7.4)]{SGA4.5}. Our assertion follows from the projection 
formula. 
\end{proof}

\begin{lemma}\label{lemma3} Let $n+1=\mathrm{dim}_{\mathbb F_q}B$. Suppose $(p, n+1)=1$. There exists a mixed lisse
$\overline{\mathbb Q}_\ell$-sheaf $\mathcal G$ on $\mathbb G_{m,\mathbb F_q}$ of rank $2n+2$ with weights $\leq n+2$
such that 
$$R (\mathbf Np_1j)_!(j^*p_1^*\mathcal K_\chi \otimes j^*f^*\mathcal L_\psi)=\mathcal G[-(n+2)].$$ 
\end{lemma}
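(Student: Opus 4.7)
The plan is to reduce to the split case via the restriction of scalars formalism of Section 2, express the pushforward as the relative cohomology of a family of non-degenerate toric exponential sums, and invoke the theory of $\ell$-adic GKZ hypergeometric sheaves from \cite{F}.

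First, by the splitting results recalled in Section 2, I would base change from $\mathbb F_q$ to a finite extension $\mathbb F_{q^r}$ over which $B$ splits completely. Under this base change, $\mathrm{Res}(\mathbb G_{m,B}) \otimes \mathbb F_{q^r}$ becomes $\mathbb G_m^{n+1}$, the morphism $\mathbf{Tr}$ becomes the coordinate sum $x_1 + \cdots + x_{n+1}$, the morphism $\mathbf N$ becomes the coordinate product $x_1 \cdots x_{n+1}$, and $\mathcal K_\chi$ decomposes as an external product $\boxtimes_{i=1}^{n+1} \mathcal K_{\chi_i}$. Since the formation of $R(\mathbf Np_1j)_!$ commutes with base change (proper base change theorem), and rank, mixedness, and weight bounds of a lisse sheaf descend, it suffices to prove the result after this split reduction.

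Second, in the split case I would fiber over the $a$-coordinate of $\mathbb G_{m, \mathbb F_q}$ by parameterizing the fiber $\{x_1 \cdots x_{n+1} = a\}$ via $x_{n+1} = a/(x_1 \cdots x_n)$. Substituting this into the Kummer characters (which recombine into Kummer sheaves on $\mathbb G_m^n$ built from the shifted characters $\chi_i \chi_{n+1}^{-1}$, up to an outer $\chi_{n+1}$-type Kummer twist on the base $\mathbb G_{m, \mathbb F_q}$) and into $f = y + z - yz\mathbf{Tr}(x)$ converts our pushforward into the relative cohomology over $\mathbb G_{m, \mathbb F_q}$ of a Kummer sheaf $\mathcal K$ on $\mathbb G_m^{n+2}$ (coordinates $x_1, \ldots, x_n, y, z$) tensored with $\mathcal L_\psi(f_a)$, where
$$f_a(x, y, z) = y + z - yz\Bigl(x_1 + \cdots + x_n + \frac{a}{x_1 \cdots x_n}\Bigr).$$

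Third, I would verify that $f_a$ is non-degenerate with respect to its Newton polytope $\Delta \subset \mathbb R^{n+2}$, whose vertices are $e_{n+1}$, $e_{n+2}$, $e_i + e_{n+1} + e_{n+2}$ for $i = 1, \ldots, n$, and $-(e_1 + \cdots + e_n) + e_{n+1} + e_{n+2}$. The delicate face is the top face $\tau = \{b = c = 1\} \cap \Delta$, on which the face polynomial reduces to $-yz \sigma_a(x)$ with $\sigma_a(x) = x_1 + \cdots + x_n + a/(x_1 \cdots x_n)$. Setting its partial derivatives on $\mathbb G_m^{n+2}$ to zero forces all $x_i$ to equal a common value $x$ with $x^{n+1} = a$ and $(n+1)x = 0$; under the hypothesis $(p, n+1) = 1$, this gives $x = 0$, contradicting $x \in \mathbb G_m$. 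The remaining faces are handled by similar and simpler arguments.

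Fourth, I invoke the main results of \cite{F} on $\ell$-adic GKZ hypergeometric sheaves: the relative cohomology is concentrated in degree $n+2$, forming a mixed lisse $\overline{\mathbb Q}_\ell$-sheaf $\mathcal G$ on $\mathbb G_{m, \mathbb F_q}$ with weights $\leq n+2$ by Weil II, and the rank equals $(n+2)! \cdot \mathrm{vol}(\mathrm{conv}(\Delta \cup \{0\}))$. A direct volume calculation (slicing along the hyperplanes $b + c = s$ for $s \in [1,2]$) gives $\mathrm{vol}(\Delta) = (n+1)/(n+2)!$, and adjoining the origin contributes $n+1$ additional pyramids of volume $1/(n+2)!$ each, one for each facet of $\Delta$ visible from the origin. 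Hence the rank is $2(n+1) = 2n+2$. The main obstacle is the case-by-case non-degeneracy check in Step 3; the top-face computation pinpoints exactly why $(p, n+1) = 1$ is required and foreshadows the need for the separate, more delicate wild-case reduction handled elsewhere in the paper.
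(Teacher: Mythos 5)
Your proposal is correct and follows essentially the same route as the paper: base change to a splitting extension, absorb the norm condition by the substitution $x_{n+1}=w/(x_1\cdots x_n)$ (which the paper implements via an explicit automorphism $\sigma$ of $\mathbb G_m^{n+3}$, producing the outer Kummer twist $\mathcal K_{\chi_{n+1}}$ you flag), identify the resulting one-parameter family of toric exponential sums with a pullback of a GKZ hypergeometric complex, and invoke \cite{F} for lisseness, concentration in degree $n+2$, the weight bound, and the rank formula in terms of $(n+2)!\,\mathrm{vol}(\Delta_\infty)$. The only cosmetic difference is that the paper outsources the Newton-polytope facts (non-degeneracy when $(p,n+1)=1$ and $(n+2)!\,\mathrm{vol}=2n+2$) to \cite[Prop.\ 3.1]{LW}, whereas you re-derive them; your identification of the critical face where $(n+1,p)=1$ enters is the same computation that underlies that citation.
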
 

\begin{proof} 
The vanishing, being lisse, the rank, and the weights of a sheaf are not affected by a finite base change. 
Choose a sufficiently large finite extension $\mathbb F_{q^d}$ of $\mathbb F_q$ so that 
$$B\otimes_{\mathbb F_q}\mathbb F_{q^d}\cong \mathbb F_{q^d}^{n+1}. $$
Let $\chi_i: \mathbb F_{q^d}^*\to\overline{\mathbb Q}_\ell^*$ be multiplicative characters so that 
the character 
$$(\mathbb F_{q^d}^*)^{n+1}\stackrel\cong \to (B\otimes_{\mathbb F_q}\mathbb F_{q^d})^*
\xrightarrow{\mathrm N_{B\otimes_{\mathbb F_q}\mathbb F_{q^d}/B}} B^* \stackrel\chi \to \overline{\mathbb Q}_\ell^*$$
is of the form 
$$(x_1, \ldots, x_{n+1})\mapsto \chi_1(x_1)\cdots\chi_{n+1}(x_{n+1}).$$
Denote by $\mathcal K_{\chi_i}$ the Kummer sheaf (Lang sheaf) on $\mathbb G_{m, \mathbb F_{q^d}}$ associated to the 
character $\chi_i$. 
By \ref{free}, the base changes of $\mathbf{N}$ and $\mathbf{Tr}$ to $\mathbb F_{q^d}$ are
\begin{eqnarray*}
Tr: \mathbb G^{n+1}_{m,\mathbb F_{q^d}}\to  \mathbb G_{a,\mathbb F_{q^d}},&\quad& (x_1, \ldots, x_{n+1})\to x_1+\cdots +x_{n+1},\\
N: \mathbb G^{n+1}_{m,\mathbb F_{q^d}}\to  \mathbb G_{m,\mathbb F_{q^d}},&\quad& (x_1, \ldots, x_{n+1})\to x_1\cdots x_{n+1}.
\end{eqnarray*}
Let $\pi_i: \mathbb G^{n+3}_{m,\mathbb F_{q^d}}\to \mathbb G_{m,\mathbb F_{q^d}}$ $(i=1, \ldots, n+3)$
be the projections to its factors, let $\mathrm{pr}:  
\mathbb G^{n+3}_{m,\mathbb F_{q^d}}\to \mathbb G^{n+1}_{m,\mathbb F_{q^d}}$ be the projection to the first $n+1$ factors, 
and let $f'$ be the morphism
$$f': \mathbb G^{n+3}_{m,\mathbb F_{q^d}}\to \mathbb G_{a,\mathbb F_{q^d}},\quad 
(x_1, \ldots, x_{n+1}, y, z)\mapsto y+z-yz(x_1+\cdots+x_{n+1}).$$
We have 
\begin{eqnarray}\label{a}
&&\Big(R(\mathbf Np_1j)_!(j^*p_1^*\mathcal K_\chi \otimes j^*f^*\mathcal L_\psi)\Big)\Big|_{\mathbb G_{m,  \mathbb F_{q^d}}}\\
&&\cong R (N\mathrm{pr})_! \Big(\pi_1 ^*\mathcal K_{\chi_1}\otimes \cdots\otimes \pi_{n+1}^*\mathcal K_{\chi_{n+1}}
\otimes f'^* \mathcal L_{\psi\circ\mathrm{Tr}_{\mathbb F_{q^d}/\mathbb F_q}}\Big).\nonumber
\end{eqnarray}
Let $\sigma$ be the automorphism 
$$\sigma: \mathbb G^{n+3}_{m,\mathbb F_{q^d}}\to  \mathbb G^{n+3}_{m,\mathbb F_{q^d}}, \quad 
(x_1, \ldots, x_{n+1}, y, z)\mapsto (x_1, \ldots, x_n, y, z,x_1\cdots x_{n+1})$$ and let $\tilde f$ be the morphism 
$$\tilde f: \mathbb G^{n+3}_{m,\mathbb F_{q^d}}\to \mathbb G_{a,\mathbb F_{q^d}},\quad 
(x_1, \ldots, x_n, y, z, w)\mapsto y+z-yz\Big(x_1+\cdots+x_n+ \frac{w}{x_1\cdots x_n}\Big).$$
We have 
\begin{eqnarray*}
\pi_i\sigma^{-1}=\pi_i\;(i=1, \ldots, n), \quad f' \sigma^{-1}=\tilde f,\quad N\mathrm{pr}=\pi_{n+3}\sigma.
\end{eqnarray*}
Moreover, we have
$$\pi_{n+1}\sigma^{-1}(x_1, \ldots, x_n, y, z, w)=\frac{w}{x_1\cdots x_n}.$$ 
It follows that 
$$(\pi_{n+1}\sigma^{-1})^*\mathcal K_{\chi_{n+1}}=\pi_{n+3}^*\mathcal K_{\chi_{n+1}}\otimes 
\pi_1^*\mathcal K_{\chi^{-1}_{n+1}}\otimes\cdots\otimes
\pi_n^*\mathcal K_{\chi^{-1}_{n+1}}.$$
So we have 
\begin{eqnarray}\label{b}
&&R (N\mathrm{pr})_! \Big(\pi_1 ^*\mathcal K_{\chi_1}\otimes \cdots\otimes \pi_{n+1}^*\mathcal K_{\chi_{n+1}}
\otimes f'^* \mathcal L_{\psi\circ\mathrm{Tr}_{\mathbb F_{q^d}/\mathbb F_q}}\Big)\\
&\cong& R (\pi_{n+3}\sigma)_! \Big(\pi_1 ^*\mathcal K_{\chi_1}\otimes \cdots\otimes \pi_{n+1}^*\mathcal K_{\chi_{n+1}}
\otimes f'^* \mathcal L_{\psi\circ\mathrm{Tr}_{\mathbb F_{q^d}/\mathbb F_q}}\Big)\nonumber\\
&\cong& R \pi_{n+3,!}\sigma^{-1, *} \Big(\pi_1 ^*\mathcal K_{\chi_1}\otimes \cdots\otimes \pi_{n+1}^*\mathcal K_{\chi_{n+1}}
\otimes f'^* \mathcal L_{\psi\circ\mathrm{Tr}_{\mathbb F_{q^d}/\mathbb F_q}}\Big)\nonumber\\
&\cong& R \pi_{n+3,!}\Big(
\pi_1 ^*\mathcal K_{\chi_1\chi_{n+1}^{-1}}\otimes \cdots\otimes \pi_{n}^*\mathcal K_{\chi_n\chi_{n+1}^{-1}}\otimes 
\pi_{n+3}^*\mathcal K_{\chi_{n+1}}
\otimes \tilde f^* \mathcal L_{\psi\circ\mathrm{Tr}_{\mathbb F_{q^d}/\mathbb F_q}}\Big)\nonumber\\
&\cong& \mathcal K_{\chi_{n+1}}\otimes  R \pi_{n+3,!}
\Big(\pi_1 ^*\mathcal K_{\chi_1\chi_{n+1}^{-1}}\otimes \cdots\otimes \pi_{n}^*\mathcal K_{\chi_n\chi_{n+1}^{-1}}
\otimes \tilde f^* \mathcal L_{\psi\circ\mathrm{Tr}_{\mathbb F_{q^d}/\mathbb F_q}}\Big).\nonumber
\end{eqnarray}
Consider the family of Laurent polynomials 
$$F=a_1y+a_2z+a_3 x_1yz+ \cdots+ a_{n+2} x_nyz +a_{n+3} \frac{yz}{x_1\cdots x_n}$$
in the variables $x_1, \ldots, x_n, y, z$. It defines a morphism 
\begin{eqnarray*}
F: \mathbb A^{n+3}_{\mathbb F_q} \times \mathbb G_{m,\mathbb F_q}^{n+2}&\to& \mathbb A^1, \\
(a_1, \ldots, a_{n+3}, x_1, \ldots, x_n, y, z)&\mapsto& 
a_1y+a_2z+a_3 x_1yz+ \cdots+ a_{n+2} x_nyz +a_{n+3} \frac{yz}{x_1\cdots x_n}.
\end{eqnarray*}
Let $\mathrm{pr}_1:\mathbb A^{n+3}_{\mathbb F_q} \times \mathbb G_{m,\mathbb F_q}^{n+2}\to \mathbb A^{n+3}_{\mathbb F_q}$ 
be the projection to the 
first factor, and for every $i\in\{1, \ldots, n\}$, let 
$$q_i:  \mathbb A^{n+3}_{\mathbb F_q} \times \mathbb G_{m,\mathbb F_q}^{n+2}\to   \mathbb G_{m,\mathbb F_q},\quad
(a_1, \ldots, a_{n+3}, x_1, \ldots, x_n, y, z)\mapsto x_i$$ be the projection. 
According to \cite{F}, we have the GKZ hypergeometric complex 
$$\mathrm{Hyp}=R\mathrm{pr}_{1,!} \Big(q_1 ^*\mathcal K_{\chi_1\chi_{n+1}^{-1}}\otimes \cdots\otimes q_{n}^*\mathcal K_{\chi_n\chi_{n+1}^{-1}}
\otimes F^* \mathcal L_\psi\Big)[2n+5].$$
Let $h$ be the morphism
$$h: \mathbb G_{m,\mathbb F_q}\to \mathbb A^{n+3}_{\mathbb F_q},\quad w\mapsto 
(1,1,-1, \ldots, -1, -w).$$ 
Then by the proper base change theorem, we have 
\begin{eqnarray}\label{c}
&&R \pi_{n+3,!}
\Big(\pi_1 ^*\mathcal K_{\chi_1\chi_{n+1}^{-1}}\otimes \cdots\otimes \pi_{n}^*\mathcal K_{\chi_n\chi_{n+1}^{-1}}
\otimes \tilde f^* \mathcal L_{\psi\circ\mathrm{Tr}_{\mathbb F_{q^d}/\mathbb F_q}}\Big)
\cong h^*\mathrm{Hyp}[-(2n+5)]|_{\mathbb G_{m,\mathbb F_q^d}}.
\end{eqnarray}
Let $\Delta_\infty(F)$ be the Newton polytope of the Laurent polynomial $F$. In \cite[Proposition 3.1]{LW}, 
Lin and Wan describe $\Delta_\infty(F)$ in detail. They show that if $(p,n+1)=1$, then the image of $h$ is contained in the 
non-degenerate locus $V\subset  \mathbb A^{n+3}_{\mathbb F_q}$  
of $F$, and $(n+2)!\mathrm{vol}(\Delta_\infty(F))=2n+2$. Using the explicit description of the codimension 1 faces of 
$\Delta_\infty(F)$ in \cite[3.2]{LW}, one verifies the condition of \cite[Theorem 0.4 (iii)]{F} holds. 
So by \cite[Theorems 0.3 (i), 0.4 (i), (iii)]{F}, on the non-degenerate locus $V$ 
of $F$, 
there exists a mixed lisse $\overline{\mathbb Q}_\ell$-sheaf $\mathcal H$ of rank $2n+2$ 
with weights $\leq n+2$ such that 
\begin{eqnarray}\label{d}
\mathrm{Hyp}|_V \cong \mathcal H[n+3].
\end{eqnarray}  
Combining (\ref{a})-(\ref{d}) together, we get that 
\begin{eqnarray*}
\Big(R(\mathbf Np_1j)_!(j^*p_1^*\mathcal K_\chi \otimes j^*f^*\mathcal L_\psi)\Big)\Big|_{\mathbb G_{m,  \mathbb F_{q^d}}}
\cong\mathcal K_{\chi_{n+1}}\otimes h^*\mathcal H[-(n+2)]|_{\mathbb G_{m,\mathbb F_q^d}}.
\end{eqnarray*}
The lemma then follows.
\end{proof}

\begin{lemma}\label{lemma4} Suppose $p|(n+1)$ and $\psi=\psi_0\circ\mathrm{Tr}_{\mathbb F_q/\mathbb F_p}$ for 
some non-trivial character $\psi_0$ of $\mathbb F_p$. Write $n+1=p^km\geq 2m$ such that $(p, m)=1$. Let 
$$\mathcal G=R (\mathbf Np_1j)_!(j^*p_1^*\mathcal K_\chi \otimes j^*f^*\mathcal L_\psi)[n+2].$$
\begin{enumerate}[(i)]
\item If $n+1>2m$, then $\mathcal G$ is 
a lisse $\overline{\mathbb Q}_\ell$-sheaf on $\mathbb G_{m,\mathbb F_q}$ of rank $n+1$ 
with weights $\leq n+2$.
\item If $n+1=2m$, then 
$\mathcal G|_{\mathbb G_{m, \mathbb F_q}-\{m^{-(n+1)}\}}$
is a mixed lisse
$\overline{\mathbb Q}_\ell$-sheaf $\mathcal G$ on $\mathbb G_{m, \mathbb F_q}-\{m^{-(n+1)}\}$ 
of rank $n+1$ with weights $\leq n+2$.
\end{enumerate}
\end{lemma}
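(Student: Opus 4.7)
The plan is to parallel the structure of Lemma \ref{lemma3}, but with one additional monomial reduction that eliminates the wild degeneracy of the Laurent polynomial. I would first make the same base change to a finite extension $\mathbb F_{q^d}/\mathbb F_q$ splitting $B$, decompose $\chi = \chi_1\otimes \cdots \otimes \chi_{n+1}$, and apply the automorphism $\sigma$ used there to obtain
\[
R(\mathbf N p_1 j)_!(j^*p_1^*\mathcal K_\chi \otimes j^*f^*\mathcal L_\psi)\big|_{\mathbb G_{m,\mathbb F_{q^d}}} \cong \mathcal K_{\chi_{n+1}} \otimes R\pi_{n+3,!}\Bigl(\bigotimes_{i=1}^n \pi_i^*\mathcal K_{\chi_i\chi_{n+1}^{-1}} \otimes \tilde f^*\mathcal L_{\psi\circ \mathrm{Tr}_{\mathbb F_{q^d}/\mathbb F_q}}\Bigr)[-(n+2)],
\]
with $\tilde f(x,y,z,w)=y+z-yz\bigl(x_1+\cdots+x_n+w/(x_1\cdots x_n)\bigr)$. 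The fibrewise Laurent polynomial $F_w$ is now degenerate, because on the facet of $\Delta_\infty(F_w)$ spanned by the monomials $\{x_iyz\}_{i=1}^n$ and $yz/(x_1\cdots x_n)$, the face polynomial is $yz\bigl(\sum x_i + w/\prod x_i\bigr)$, and at the solutions of the $x_i$-critical equations $x_i=w/\prod x_j$ (forcing $x_i=c$ with $c^{n+1}=w$) the face polynomial itself equals $(n+1)c\cdot yz$, which vanishes identically because $p\mid(n+1)$. Consequently \cite[Theorems 0.3, 0.4]{F} do not apply to $F_w$ directly.

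The key new ingredient is a further toric change of variables exploiting the decomposition $n+1=p^km$ with $(m,p)=1$ and the hypothesis $\psi=\psi_0\circ \mathrm{Tr}_{\mathbb F_q/\mathbb F_p}$, so that $\psi(t^{p^k})=\psi(t)$. Using this Artin--Schreier invariance one absorbs the $p^k$-fold redundant direction of the Newton polytope into a single direction, producing a new Laurent polynomial $\tilde F_w$ whose normalised Newton volume is $n+1$ (half of the tame value $2n+2$). A face-by-face analysis in the spirit of \cite[3.2]{LW}, adapted to the reduced polytope, then shows that $\tilde F_w$ is non-degenerate for every $w\in\mathbb G_{m}$ whenever the reduction is strict, i.e.\ $n+1>2m$; in the borderline case $n+1=2m$, which forces $p=2$, $k=1$ and hence $n\equiv 1\pmod 4$, the reduced top face just barely admits a critical point, and solving the system $mc^{n+1}=w$ together with the residual face relation pins this obstruction to the single value $w=m^{-(n+1)}$. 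Applying \cite[Theorems 0.3(i), 0.4(i), (iii)]{F} on the non-degenerate locus $V\subset\mathbb A^{n+3}_{\mathbb F_q}$ and pulling back along the appropriate analogue of the map $h$ of Lemma \ref{lemma3} gives a mixed lisse sheaf of rank $n+1$ with weights $\leq n+2$; descent from $\mathbb F_{q^d}$ to $\mathbb F_q$ preserves these invariants.

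The main obstacle is the explicit construction and justification of the further reduction. It must be defined over $\mathbb F_q$ so that the conclusion descends, it must be compatible with all the Kummer sheaves $\mathcal K_{\chi_i\chi_{n+1}^{-1}}$ so that the projection formula can be reapplied without losing track of the characters, and its output must have precisely the claimed failure locus: everywhere non-degenerate in case~(i), and non-degenerate off the single point $m^{-(n+1)}$ in case~(ii). Carrying out the combinatorics of the reduced Newton polytope, checking that every proper face of $\Delta_\infty(\tilde F_w)$ remains non-degenerate, and verifying that the unique bad value is exactly $m^{-(n+1)}$ (and not, say, $(-m)^{-(n+1)}$ or some other scalar), is where the inputs $n+1=2m$ and the explicit form of the exceptional fibre genuinely enter.
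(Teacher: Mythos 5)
Your proposal correctly identifies the overall structure — reduce to the split case as in Lemma \ref{lemma3}, observe that the Laurent polynomial $\tilde f$ becomes degenerate because $p\mid(n+1)$, and then perform a further reduction that exploits $n+1=p^km$ and the hypothesis on $\psi$ to obtain a Laurent polynomial of normalized Newton volume $n+1$ to which the GKZ machinery applies. The identification of the degenerate facet and of $n+1=2m$ forcing $p=2$, $k=1$, $n\equiv 1\pmod 4$ are both right, and your expectation that the exceptional fibre in case (ii) sits at $w=m^{-(n+1)}$ agrees with the paper. So the \emph{strategy} is the paper's.

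However, the crux of the argument — the explicit ``further reduction'' — is precisely where your write-up stops, and this gap is genuine rather than a matter of routine bookkeeping. What is needed is not a single toric change of variables: the paper's reduction is a composite of (a) a radiciel Frobenius pull-back $w\mapsto w^{p^k}$ on the base $\mathbb G_m$ (harmless for \'etale cohomology), (b) the toric automorphism $\tau:(x_i,y,z,w)\mapsto(x_iz^{-1},y,y^{-1}z,w)$, which strips the common $yz$ prefactor and exposes a $z^{n+1}$ in the last term (at the cost of picking up an extra Kummer sheaf $\pi_{n+2}^*\mathcal K_{\chi_1^{-1}\cdots\chi_n^{-1}\chi_{n+1}^n}$ from the substitutions $x_i\mapsto x_iz^{-1}$, which one must track), (c) a second radiciel Frobenius $x_i\mapsto x_i^{p^k}$ on the $x$-coordinates, after which the offending term $w^{p^k}z^{n+1}/(x_1^{p^k}\cdots x_n^{p^k})$ is the $p^k$-th power of $wz^m/(x_1\cdots x_n)$ because $n+1=p^km$, and only then (d) the Artin--Schreier reduction $\tilde f'''-\hat f=h^p-h$, which uses $\psi=\psi_0\circ\mathrm{Tr}_{\mathbb F_q/\mathbb F_p}$. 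None of the Frobenius pull-backs is a toric substitution, and without step (a) the exponent of $w$ never becomes a $p^k$-th power, so the Artin--Schreier reduction simply cannot fire. Your phrase ``absorbs the $p^k$-fold redundant direction'' describes the desired effect but not a mechanism that produces it. Likewise, determining the non-degeneracy of $\hat f$ and locating the unique exceptional $w=m^{-(n+1)}$ requires the full facial analysis of $\Delta_\infty(\hat f)$ carried out in Lemma \ref{p|n+1}; the relation ``$mc^{n+1}=w$ together with the residual face relation'' is the right shape, but the missing residual relation $x_1/x_{n+1}=1/m$ is exactly the nontrivial output of that analysis, not something one can assert without doing it.
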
 

\begin{proof} The Frobenius 
correspondence 
$$\mathrm{Fr}: \mathbb G_{m,\mathbb F_{q^d}} \to  \mathbb G_{m,\mathbb F_{q^d}}, \quad x\mapsto x^{p^k}$$
is a finite radiciel morphism and has no effect on \'etale topology. Combined with (\ref{b}) it suffices to show the sheaves 
\begin{eqnarray}\label{o}
&&\mathrm{Fr}^* 
\Big(R (\mathbf Np_1j)_!(j^*p_1^*\mathcal K_\chi \otimes j^*f^*\mathcal L_\psi)\Big)\Big|_{\mathbb G_{m,\mathbb F_{q^d}}}\\
&\cong& \mathrm{Fr}^* \mathcal K_{n+1}\otimes \mathrm{Fr}^* R \pi_{n+3,!}
\Big(\pi_1 ^*\mathcal K_{\chi_1\chi_{n+1}^{-1}}\otimes \cdots\otimes \pi_{n}^*\mathcal K_{\chi_n\chi_{n+1}^{-1}}
\otimes \tilde f^* \mathcal L_{\psi\circ\mathrm{Tr}_{\mathbb F_{q^d}/\mathbb F_q}}\Big)\nonumber
\end{eqnarray} 
have the same property as stated in the lemma. (Note that since $m\in \mathbb Z$, the point $m^{-(n+1)}$ is fixed under $\mathrm{Fr}$.) 
By the proper base change theorem, we have an isomorphism 
\begin{eqnarray}\label{i}
&&\mathrm{Fr}^* R \pi_{n+3,!}
\Big(\pi_1 ^*\mathcal K_{\chi_1\chi_{n+1}^{-1}}\otimes \cdots\otimes \pi_{n}^*\mathcal K_{\chi_n\chi_{n+1}^{-1}}
\otimes \tilde f^* \mathcal L_{\psi\circ\mathrm{Tr}_{\mathbb F_{q^d}/\mathbb F_q}}\Big)\\
&\cong& R \pi_{n+3,!} \Big(\pi_1 ^*\mathcal K_{\chi_1\chi_{n+1}^{-1}}\otimes \cdots\otimes \pi_{n}^*\mathcal K_{\chi_n\chi_{n+1}^{-1}}
\otimes \tilde f'^* \mathcal L_{\psi\circ\mathrm{Tr}_{\mathbb F_{q^d}/\mathbb F_q}}\Big),\nonumber
\end{eqnarray}
where $\tilde f'$ is the morphism 
$$\tilde f': \mathbb G_{m,\mathbb F_{q^d}}^{n+3} \to \mathbb G_{a,\mathbb F_{q^d}},
\quad (x_1,\ldots, x_n, y, z, w)\mapsto y+z-yz\Big(x_1+\cdots+x_n+\frac{w^{p^k}}{x_1\cdots x_n}\Big).$$
Let $\tau$ be the automorphism 
$$\mathbb G_{m,\mathbb F_{q^d}}^{n+3} \to \mathbb G_{m,\mathbb F_{q^d}}^{n+3},
\quad (x_1,\ldots, x_n, y, z, w)\mapsto (x_1 z^{-1}, \ldots, x_nz^{-1}, y, y^{-1}z, w).$$ 
We have 
\begin{eqnarray}\label{ii}
&&R \pi_{n+3,!} \Big(\pi_1 ^*\mathcal K_{\chi_1\chi_{n+1}^{-1}}\otimes \cdots\otimes \pi_{n}^*\mathcal K_{\chi_n\chi_{n+1}^{-1}}
\otimes\tilde  f'^* \mathcal L_{\psi\circ\mathrm{Tr}_{\mathbb F_{q^d}/\mathbb F_q}}\Big)\\
&\cong& R( \pi_{n+3}\tau)_! \tau^*\Big(\pi_1 ^*\mathcal K_{\chi_1\chi_{n+1}^{-1}}\otimes \cdots\otimes \pi_{n}^*\mathcal K_{\chi_n\chi_{n+1}^{-1}}
\otimes\tilde  f'^* \mathcal L_{\psi\circ\mathrm{Tr}_{\mathbb F_{q^d}/\mathbb F_q}}\Big)\nonumber\\
&\cong& R\pi_{n+3,!} \Big(\pi_1 ^*\mathcal K_{\chi_1\chi_{n+1}^{-1}}\otimes \cdots\otimes \pi_{n}^*\mathcal K_{\chi_n\chi_{n+1}^{-1}}
\otimes \pi_{n+2}^* \mathcal K_{\chi_1^{-1}\cdots\chi_n^{-1}\chi^n_{n+1}}
\otimes \tilde f''^* \mathcal L_{\psi\circ\mathrm{Tr}_{\mathbb F_{q^d}/\mathbb F_q}}\Big),\nonumber
\end{eqnarray}
where $\tilde f''=\tilde f'\tau$ is the morphism 
$$\tilde f'': \mathbb G_{m,\mathbb F_{q^d}}^{n+3} \to \mathbb G_{a,\mathbb F_{q^d}},
\quad (x_1,\ldots, x_n, y, z, w)\mapsto y+y^{-1}z- \Big(x_1+\cdots+x_n+\frac{w^{p^k}z^{n+1}}{x_1\cdots x_n}\Big).$$
Let $\mathrm{Fr}'$ and $\tilde f'''$ be the morphism
\begin{eqnarray*}
\mathrm{Fr}':  \mathbb G_{m,\mathbb F_{q^d}}^{n+3}\to  \mathbb G_{m,\mathbb F_{q^d}}^{n+3},&&
(x_1,\ldots, x_n, y, z, w)\mapsto (x_1^{p^k},\ldots, x_n^{p^k},y, z, w),\\
\tilde f''': \mathbb G_{m,\mathbb F_{q^d}}^{n+3} \to \mathbb G_{a,\mathbb F_{q^d}},
&& (x_1,\ldots, x_n, y, z, w)\mapsto y+y^{-1}z- \Big(x_1^{p^k}+\cdots+x_n^{p^k}+\Big(\frac{w z^m}{x_1\cdots x_n}\Big)^{p^k}\Big).
\end{eqnarray*} 
Then $\mathrm{Fr}'$ is a finite raciel homomorphism of group schemes and $\tilde f''\mathrm{Fr}'=\tilde f'''$. So we have
\begin{eqnarray}\label{iii}
&& R\pi_{n+3,!} \Big(\pi_1 ^*\mathcal K_{\chi_1\chi_{n+1}^{-1}}\otimes \cdots\otimes \pi_{n}^*\mathcal K_{\chi_n\chi_{n+1}^{-1}}
\otimes \pi_{n+2}^* \mathcal K_{\chi_1^{-1}\cdots\chi_n^{-1}\chi^n_{n+1}}
\otimes \tilde f''^* \mathcal L_{\psi\circ\mathrm{Tr}_{\mathbb F_{q^d}/\mathbb F_q}}\Big)\\
&\cong& R(\pi_{n+3}\mathrm{Fr}')_!\mathrm{Fr}'^* \Big(\pi_1 ^*\mathcal K_{\chi_1\chi_{n+1}^{-1}}\otimes \cdots\otimes \pi_{n}^*\mathcal K_{\chi_n\chi_{n+1}^{-1}}
\otimes \pi_{n+2}^* \mathcal K_{\chi_1^{-1}\cdots\chi_n^{-1}\chi^n_{n+1}}
\otimes \tilde f''^* \mathcal L_{\psi\circ\mathrm{Tr}_{\mathbb F_{q^d}/\mathbb F_q}}\Big)\nonumber\\
&\cong& R\pi_{n+3,!}\Big(\pi_1 ^*\mathcal K_{\chi^{p^k}_1\chi_{n+1}^{-p^k}}\otimes \cdots\otimes 
\pi_{n}^*\mathcal K_{\chi^{p^k}_n\chi_{n+1}^{-p^k}}
\otimes \pi_{n+2}^* \mathcal K_{\chi_1^{-1}\cdots\chi_n^{-1}\chi^n_{n+1}}
\otimes \tilde f'''^* \mathcal L_{\psi\circ\mathrm{Tr}_{\mathbb F_{q^d}/\mathbb F_q}}\Big).\nonumber
\end{eqnarray}
Finally, let $\hat f$ be the morphism 
$$\hat f: \mathbb G_{m,\mathbb F_{q^d}}^{n+3} \to \mathbb G_{a,\mathbb F_{q^d}},
\quad (x_1,\ldots, x_n, y, z, w)\mapsto y+y^{-1}z- \Big(x_1+\cdots+x_n+\frac{w z^m}{x_1\cdots x_n}\Big).$$
We claim that
\begin{eqnarray}\label{iv}
\tilde f'''^* \mathcal L_{\psi\circ\mathrm{Tr}_{\mathbb F_{q^d}/\mathbb F_q}}
\cong  \hat f^* \mathcal L_{\psi\circ\mathrm{Tr}_{\mathbb F_{q^d}/\mathbb F_q}}
\end{eqnarray}
We have $\mathcal L_{\psi \circ\mathrm{Tr}_{\mathbb F_{q^d}/\mathbb F_q}}
\cong \mathcal L_{\psi_0\circ\mathrm{Tr}_{\mathbb F_{q^d}/\mathbb F_p}}$. The sheaf
$\mathcal L_{\psi_0\circ\mathrm{Tr}_{\mathbb F_{q^d}/\mathbb F_p}}$ is obtained by pushing forward the Artin-Schreier torsor
$$\mathbb G_{a, \mathbb F_{q^d}}\to \mathbb G_{a, \mathbb F_{q^d}}, \quad x\mapsto x^p-x$$
via the character $\psi_0$. To prove the claim, it suffices to show the inverse images of the above Artin-Schreier torsor by $\tilde f'''$ and 
$\hat f$ are isomorphic. This follows from the fact that we have $\tilde f'''-\hat f= h^p-h$ 
for some Laurent polynomial $h$ and we have an exact sequence 
$$\Gamma(X, \mathcal O_X)\stackrel{h\mapsto h^p-h}\to \Gamma(X, \mathcal O_X)\to  H^1(X, \mathbb F_p)$$ in the 
Artin-Schreier theory. 
Combining (\ref{i})-(\ref{iv}) together, we get 
\begin{eqnarray}\label{v}
&&\mathrm{Fr}^* R \pi_{n+3,!}
\Big(\pi_1 ^*\mathcal K_{\chi_1\chi_{n+1}^{-1}}\otimes \cdots\otimes \pi_{n}^*\mathcal K_{\chi_n\chi_{n+1}^{-1}}
\otimes \tilde f^* \mathcal L_{\psi\circ\mathrm{Tr}_{\mathbb F_{q^d}/\mathbb F_q}}\Big)\\
&\cong&R\pi_{n+3,!}\Big(\pi_1 ^*\mathcal K_{\chi^{p^k}_1\chi_{n+1}^{-p^k}}\otimes \cdots\otimes 
\pi_{n}^*\mathcal K_{\chi^{p^k}_n\chi_{n+1}^{-p^k}}
\otimes \pi_{n+2}^* \mathcal K_{\chi_1^{-1}\cdots\chi_n^{-1}\chi^n_{n+1}}
\otimes \hat f^* \mathcal L_{\psi\circ\mathrm{Tr}_{\mathbb F_{q^d}/\mathbb F_q}}\Big).\nonumber
\end{eqnarray}
Consider the family of Laurent polynomials 
$$\hat F=a_1y+a_2y^{-1}z+a_3 x_1+ \cdots+ a_{n+2} x_n +a_{n+3} \frac{z^m}{x_1\cdots x_n}$$
in the variables $x_1, \ldots, x_n, y, z$. It defines a morphism 
\begin{eqnarray*}
\hat F: \mathbb A^{n+3}_{\mathbb F_q} \times \mathbb G_{m,\mathbb F_q}^{n+2}&\to& \mathbb A^1, \\
(a_1, \ldots, a_{n+3}, x_1, \ldots, x_n, y, z)&\mapsto& 
a_1y+a_2y^{-1}z+a_3 x_1+ \cdots+ a_{n+2} x_n +a_{n+3} \frac{z^m}{x_1\cdots x_n}.
\end{eqnarray*}
Let $\mathrm{pr}_1:\mathbb A^{n+3}_{\mathbb F_q} \times \mathbb G_{m,\mathbb F_q}^{n+2}\to \mathbb A^{n+3}_{\mathbb F_q}$ 
be the projection to the 
first factor, and for $i\in\{1, \ldots, n+2\}$, let 
$q_i:  \mathbb A^{n+3}_{\mathbb F_q} \times \mathbb G_{m,\mathbb F_q}^{n+2}\to   \mathbb G_{m,\mathbb F_q}$ be the projections
to the $n+2$ factors $\mathbb G_{m,\mathbb F_q}$. 
According to \cite{F}, we have the GKZ hypergeometric complex 
$$\mathrm{Hyp}=R\mathrm{pr}_{1,!} \Big(q_1 ^*\mathcal K_{\chi_1^{p^k}\chi_{n+1}^{-p^k}}\otimes \cdots
\otimes q_{n}^*\mathcal K_{\chi_n^{p^k}\chi_{n+1}^{-p^k}}\otimes q_{n+2}^*\mathcal K_{\chi_1^{-1}\cdots\chi_n^{-1}\chi_{n+1}^n}
\otimes \hat F^* \mathcal L_\psi\Big)[2n+5].$$
Let $h$ be the morphism
$$h: \mathbb G_{m,\mathbb F_q}\to \mathbb A^{n+3}_{\mathbb F_q},\quad w\mapsto 
(1,1,-1, \ldots, -1, -w).$$ 
Then by the proper base change theorem, we have 
\begin{eqnarray}\label{last}
&&R\pi_{n+3,!}\Big(\pi_1 ^*\mathcal K_{\chi^{p^k}_1\chi_{n+1}^{-p^k}}\otimes \cdots\otimes 
\pi_{n}^*\mathcal K_{\chi^{p^k}_n\chi_{n+1}^{-p^k}}
\otimes \pi_{n+2}^* \mathcal K_{\chi_1^{-1}\cdots\chi_n^{-1}\chi^n_{n+1}}
\otimes \hat f^* \mathcal L_{\psi\circ\mathrm{Tr}_{\mathbb F_{q^d}/\mathbb F_q}}\Big)\\
&\cong& h^*\mathrm{Hyp}[-(2n+5)]|_{\mathbb G_{m,\mathbb F_q^d}}.\nonumber
\end{eqnarray}
Let $\Delta_\infty(\hat F)$ be the Newton polytope of the Laurent polynomial $\hat F$. By Lemma \ref{p|n+1} below, since 
$n+1=p^km \geq 2m$, we have
$$(n+2)!\mathrm{vol}(\Delta_\infty(\hat F))=n+1.$$ By the explicit description of the codimension 1 faces of 
$\Delta_\infty(\hat F)$ in the proof of Lemma \ref{p|n+1} (ii)-(iii), one verifies the condition of \cite[Theorem 0.4 (iii)]{F} holds. 
So by \cite[Theorems 0.3 (i), 0.4 (i), (iii)]{F}, on the non-degenerate locus $V\subset  \mathbb A^{n+3}_{\mathbb F_q}$ 
of $\hat F$, 
there exists a mixed lisse $\overline{\mathbb Q}_\ell$-sheaf $\mathcal H$ of rank $n+1$ 
with weights $\leq n+2$ such that 
\begin{eqnarray}\label{eqnhyp}
\mathrm{Hyp}|_V \cong \mathcal H[n+3].
\end{eqnarray}  
If $n+1>2m$, then by Lemma \ref{p|n+1} (iii), 
the image of $h$ is contained in the 
non-degenerate locus $V$. Combining (\ref{o}), (\ref{v})-(\ref{eqnhyp}) together, we get 
\begin{eqnarray*}
\mathrm{Fr}^*\Big(R(\mathbf Np_1j)_!(j^*p_1^*\mathcal K_\chi \otimes j^*f^*\mathcal L_\psi)\Big)\Big|_{\mathbb G_{m,  \mathbb F_{q^d}}}
\cong\mathrm{Fr}^* \mathcal K_{\chi_{n+1}}\otimes h^*\mathcal H[-(n+2)]|_{\mathbb G_{m,\mathbb F_q^d}}.
\end{eqnarray*}
This proves the lemma in the case $n+1>2m$. If $n+1=2m$, then by Lemma \ref{p|n+1} (ii), the image of $\mathbb G_{m, \mathbb F_{q^d}}
-\{m^{-(n+1)}\}$ under $h$ is contained in the non-degenerate locus $V$. We prove 
the lemma in this case exactly as above.  
\end{proof}

\begin{remark} The condition (iii) in \cite[Theorem 0.4]{F} actually follows from the condition that the Laurent polynomial 
$f=\sum_{j=1}^N a_j t_1^{w_{1j}}\cdots t_n^{w_{nj}}$ is nondegenerate with respect to the Newton polytope $\Delta_\infty$
at $\infty$, and hence is redundant, where 
$\Delta_\infty$ is the convex hull of $0$ and $\mathbf w_j=(w_{1j}, \ldots, w_{nj})$ $(j=1, \ldots, N)$ in $\mathbb R^n$. This can be seen as follows. 
Suppose $f$ is nondegenerate but the condition (iii) in \cite[Theorem 0.4]{F} does not hold. Let $\Gamma$ be a codimension 
1 face of $\Delta_\infty$ not containing the origin, and let 
$$\phi(v_1, \ldots, v_n)=d_1v_1+\cdots +d_n v_n$$
be a linear form such that $d_1, \ldots, d_n$ are relatively prime integers, that $\phi|_{\Delta_\infty}$ takes its minimum $d_\Gamma$ exactly on
$\Gamma$, but $d_\Gamma=pd$ for some integer $d$. One can check $\mathbb Z^n/\mathbb Z(d_1, \ldots, d_n)$ 
is torsion free and hence free. So $\mathbb Z(d_1, \ldots, d_n)$ is a direct factor of $\mathbb Z^n$. Changing the basis of $\mathbb Z^n$, 
we may assume $$(d_1, \ldots, d_n)=(1, 0,\ldots, 0).$$ Let $f_\Gamma=\sum_{\mathbf w_j\in \Gamma}a_j t_1^{w_{1j}}\cdots t_n^{w_{nj}}.$
Then $f_\Gamma$ is of the form
$$f_\Gamma=t_1^{pd} g(t_2, \ldots, t_n).$$ 
The assumption that $f$ is nondegenerate implies that 
$$g_1:=t_1^{pd} g(t_2, \ldots, t_n),\quad g_2:=t_1^{d} g(t_2, \ldots, t_n)$$ are both nondegenerate. 
Let $\Delta_\infty(g_1)$ and $\Delta_\infty(g_2)$ be the Newton polytopes at $\infty$ of $g_1$ and $g_2$, respectively. 
Then by \cite[Theorem 1.3 (b)]{DL}, we have 
\begin{eqnarray}\label{dimcount}
\mathrm{dim}\, H^n(\mathbb G^n_{m, \bar{\mathbb F}_q} , g_1^* \mathcal L_\psi)&=&n! \mathrm{vol}(\Delta_\infty(g_1)),\nonumber\\
\mathrm{dim}\, H^n(\mathbb G^n_{m, \bar{\mathbb F}_q} , g_2^* \mathcal L_\psi)&=&n! \mathrm{vol}(\Delta_\infty(g_2)),\nonumber\\
\mathrm{vol}(\Delta_\infty(g_1))&=&p\,\mathrm{vol}(\Delta_\infty(g_2)). 
\end{eqnarray}
Let $G$ be the finite radiciel morphism
$$G: \mathbb G_m^n\to \mathbb G_m^n, \quad (t_1,t_2, \ldots, t_n)\mapsto (t_1^p, t_2, \ldots, t_n).$$
We have $g_2\circ G=g_1$ and hence
\begin{eqnarray*}
 H^n(\mathbb G^n_{m, \bar{\mathbb F}_q} , g_1^* \mathcal L_\psi)\cong
 H^n(\mathbb G^n_{m, \bar{\mathbb F}_q} , G^*g_2^* \mathcal L_\psi)
 \cong   H^n(\mathbb G^n_{m, \bar{\mathbb F}_q} , g_2^* \mathcal L_\psi).
\end{eqnarray*}
This contradicts with (\ref{dimcount}). 
Alternatively, one may note that the exponential sums associated to $g_1$ and $g_2$ are equal, and hence they have 
the same $L$-function. If $g_1$ and $g_2$ are nondegenerate, the degrees of their $L$-functions are 
$(-1)^{n+1}n! \mathrm{vol}(\Delta_\infty(g_1))$ and $(-1)^{n+1}n!\mathrm{vol}(\Delta_\infty(g_2))$, respectively. Again this leads 
to contradiction. It would be interesting to give a proof without using the $\ell$-adic cohomology or $L$-functions. 
\end{remark}

\subsection{Proof of Theorem \ref{thm}} 
By Lemmas \ref{lemma1}-\ref{lemma4}, the distinguished triangle (\ref{triangle}) 
can be written as 
$$\mathcal G[-(n+2)]\to \mathrm{EIK}(-1)[-2]\to R\mathbf N_!\mathcal K_\chi[-1]\to,$$
We thus have a distinguished triangle
$$\mathcal G(1)[-n]\to \mathrm{EIK}\to R\mathbf N_!\mathcal K_\chi(1)[1]\to.$$
Theorem \ref{thm} (i) is proved in Lemma \ref{lemma2}. Theorem \ref{thm} (ii)-(iii) about the properties of $\mathcal H$ 
follows from the properties of $\mathcal G$ in Lemmas \ref{lemma3}-\ref{lemma4}.

\begin{lemma}\label{p|n+1} Let $m$ be a positive integer such that $(p, m)=1$. Consider the one parameter family of  Laurent polynomials  
$$\hat f=x_{n+1}+ \frac{x_{n+2}}{x_{n+1}}- \Big(x_1+\cdots+x_n+\frac{w x_{n+2}^m}{x_1\cdots x_n}\Big)$$
in the $(n+2)$ variables $x_1, \ldots, x_{n+2}$ with parameter $w$. 
Let $\Delta_\infty(\hat f)$ be its Newton polytope at $\infty$. 

(i) If $n+1<2m$, the Laurent polynomial $\hat f$ is non-degenerate for every $w\not=0$, and we have 
$$(n+2)!\mathrm{vol}(\Delta_\infty(\hat f))=2m.$$

(ii) If $n+1=2m$, the Laurent polynomial $\hat f$ is non-degenerate for every $w\not\in \{0, m^{-(n+1)}\}$, and we have 
$$(n+2)!\mathrm{vol}(\Delta_\infty(\hat f))=2m = n+1.$$

(iii) If $n+1>2m$,  the Laurent polynomial $\hat f$ is non-degenerate for every $w\not=0$, and we have 
$$(n+2)!\mathrm{vol}(\Delta_\infty(\hat f))=n+1.$$ 
\end{lemma}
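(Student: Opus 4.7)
The plan is to analyze the combinatorial structure of $\Delta_\infty(\hat f)$ directly, compute its normalized volume $(n+2)!\mathrm{vol}(\Delta_\infty(\hat f))$ by a simplicial decomposition from the origin, and then verify non-degeneracy face by face; in every step, the case $n+1=2m$ will be the delicate one.

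First I would list the $n+3$ exponent vectors of the monomials of $\hat f$, namely $v_i=e_i$ for $i=1,\ldots,n+1$, $v_{n+2}=-e_{n+1}+e_{n+2}$, and $v_{n+3}=-(e_1+\cdots+e_n)+me_{n+2}$, so that $\Delta_\infty(\hat f)=\mathrm{conv}(0,v_1,\ldots,v_{n+3})\subset\mathbb R^{n+2}$. Then for each $j\in\{1,\ldots,n+3\}$ I would write down the unique (up to scalar) affine functional taking the value $1$ on the $n+2$ vertices $\{v_i:i\ne j\}$ and test whether $v_j$ lies on the same side as $0$. By the symmetry among $v_1,\ldots,v_n$ this reduces to four types of candidate facets, and the resulting supporting-hyperplane conditions are: (A) omitting $v_j$ with $j\le n$ requires $n+1\ge 2m$; (B) omitting $v_{n+1}$ requires $n+1\le 2m$; (C) omitting $v_{n+2}$ requires $n+1\le 2m$; (D) omitting $v_{n+3}$ requires $n+1\ge 2m$. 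Consequently the facets of $\Delta_\infty$ not containing $0$ are the $n$ of type (A) together with the single type (D) facet in case (iii); one each of types (B) and (C) in case (i); and in case (ii) all four inequalities collapse to equalities, $\Delta_\infty$ becomes the pyramid with apex $0$ over the common hyperplane $x_1+\cdots+x_{n+1}+2x_{n+2}=1$, and the unique facet missing $0$ is the pyramid base $F=\mathrm{conv}(v_1,\ldots,v_{n+3})$.

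For the volume I would triangulate $\Delta_\infty$ from the origin: each facet $F$ missing $0$ contributes the simplex $\mathrm{conv}(0,F)$ whose normalized volume is an elementary determinant. A short calculation yields $1$ for each type (A) and type (D) simplex and $m$ for each type (B) and type (C) simplex, so $(n+2)!\mathrm{vol}(\Delta_\infty)=n+1$ in case (iii) and $=2m$ in case (i). In case (ii) the two formulas specialize to the same value $n+1=2m$, which I would justify either by continuity of the volume along the parameter $2m-(n+1)$, or by picking an explicit triangulation of the pyramid base $B$ that respects the degenerate coincidence of the (A), (B), (C), (D) hyperplanes and summing the resulting cone volumes directly.

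Finally I would verify non-degeneracy. For every face $\tau$ of $\Delta_\infty$ not containing $0$, the system $x_i\partial_{x_i}\hat f^\tau=0$ must have no solution in $\mathbb G_m^{n+2}$. For each small facet of type (A), (B), (C) or (D) in its relevant regime, one of the logarithmic partials restricts on $\tau$ to a single non-zero monomial (with coefficient $w$, $-1$, $-mw$ or $-x_j$ respectively), which cannot vanish on the torus, and the same observation propagates to every proper subface. The delicate case is (ii), where $\tau=F$ forces $\hat f^F=\hat f$ itself; solving the full critical-point system gives $x_1=\cdots=x_n=:X$, $x_{n+1}^2=x_{n+2}$ and then $x_{n+1}=mX$ together with $X^{n+1-2m}=wm^{2m}$. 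Under the hypothesis $n+1=2m$ this becomes $1=wm^{2m}$, i.e.\ $w=m^{-(n+1)}$, so for every other $w\in\mathbb G_m$ the system has no solution and $\hat f$ is non-degenerate. The main obstacle will be case (ii): both producing a clean simplicial decomposition of the degenerate pyramid to recompute the normalized volume independently of cases (i)/(iii), and cleanly carrying out the above elimination under the characteristic-$p$ hypothesis (which, combined with $p\mid n+1$, $n+1=2m$ and $(p,m)=1$, forces $p=2$ and $m$ odd) to isolate the exceptional value $w=m^{-(n+1)}$.
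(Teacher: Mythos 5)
Your proposal is correct and follows essentially the same route as the paper: the same list of $n+3$ exponent vectors, the same identification of the facets of $\Delta_\infty(\hat f)$ not through the origin by testing which vertex can be omitted (your types (A)--(D) and their sidedness inequalities match the paper's case analysis exactly), the same volume computation via the cones over those facets, and the same face-by-face non-degeneracy check culminating, in the case $n+1=2m$, in solving the full critical-point system to isolate the exceptional value $w=m^{-(n+1)}$. The one place where your plan is more compressed than the paper's is the treatment of proper subfaces of the big base facet when $n+1=2m$: the paper proves explicitly (using the relation $V_1+\cdots+V_n+V_{n+3}-m(V_{n+1}+V_{n+2})=0$) that any face of codimension $\geq 2$ misses one of $V_{n+1},V_{n+2}$ and hence is a subface of one of the two simplicial facets from case (i), whereas you assert the single-monomial observation ``propagates''; that assertion is true but would need the same kind of supporting argument to be made rigorous.
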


\begin{proof}  Recall that $\Delta:=\Delta_\infty(\hat f)$ is the convex hull in $\mathbb R^{n+2}$ of $(0,\ldots, 0)$ together with the
exponents $\{ V_1, \cdots, V_{n+1}, V_{n+2}, V_{n+3}\}$ of monomials appeared in $\hat f$, where 
$$V_1=(1, 0, \cdots, 0), \cdots, V_{n+1}=(0, \cdots, 0, 1, 0), V_{n+2}=(0, \cdots, 0, -1, 1), 
V_{n+3}=(-1, \cdots, -1, 0, m).$$  
To find the non-degenerate locus of $\hat f$, the first step is to find all the codimension $1$ faces $\delta$ of $\Delta$ not containing the origin. The answer is a little subtle, and depends on the relative size between $n+1$ 
and $2m$. 

(i) Assume $n+1<2m$. Let $\delta$ be a codimension $1$ face of $\Delta$ not containing the origin in $\mathbb R^{n+2}$. Then
$\delta$ must contain at least $n+2$ points out of the $n+3$ points  $V_1, \cdots, V_{n+3}$. We claim that $\delta$ must 
contain $\{ V_1, \cdots, V_n, V_{n+3}\}$. 
If $\delta$ does not contain $V_{n+3}$, then $\delta$ must contain $\{V_1, \cdots, V_{n+2}\}$ 
and thus  the normalized equation of $\delta$ is
$$x_1 +\cdots + x_{n+1} + 2x_{n+2}=1.$$
Substituting the coordinates of $V_{n+3}$ into this equation, one checks that 
$$(-1)+\cdots +(-1)+ 0 + 2m = 2m - n > 1.$$
This means that $V_{n+3}$ and the origin are on different sides of $\delta$, 
contradicting to our assumption that $\delta$ is a codimension $1$ face of $\Delta$ not containing the origin.  
Similarly, if $\delta$ does not contain $V_1$, then $\delta$ must contain $\{V_2, \cdots, V_{n+3}\}$ 
and the normalized equation of $\delta$ is
$$(2m-n)x_1+x_2 +\cdots + x_{n+1} + 2x_{n+2}=1.$$
Substituting the coordinates of $V_{1}$ into this equation, one checks that 
$$(2m-n) + 0 +\cdots + 0+ 0 = 2m - n > 1.$$ Contradiction again.  By symmetry, we also have a contradiction if $\delta$ does not contain 
$V_i$ for some $1\leq i \leq n$. This proves the claim that $\delta$ must contain $\{ V_1, \cdots, V_n, V_{n+3}\}$. 
There are only two possibilities for $\delta$: either $\delta$ contains $V_{n+1}$ or $\delta$ contains $V_{n+2}$. 
If $\delta$ contains $\{V_1,  \cdots, V_n, V_{n+1}, V_{n+3}\}$, then the equation of $\delta$ is 
$$\delta_1: \ x_1+\cdots +x_{n+1} + \frac{n+1}{m} x_{n+2} = 1.$$
Substituting $V_{n+2}$ into this equation, one finds that 
$$0 + \cdots + 0 + (-1) + \frac{n+1}{m} = \frac{n+1 - m}{m} < 1$$
which proves that $\delta_1$ is indeed a codimension $1$ face of $\Delta$ not containing the origin. 
If $\delta$ contains $\{V_1,  \cdots, V_n, V_{n+2}, V_{n+3}\}$, then the equation of $\delta$ is 
$$\delta_2: \ x_1+\cdots +x_{n} + \frac{n+1-m}{m}x_{n+1} + \frac{n+1}{m} x_{n+2} = 1.$$
Substituting $V_{n+1}$ into this equation, one finds that 
$$0 + \cdots + 0  + \frac{n+1-m}{m}  + 0= \frac{n+1 - m}{m} < 1$$
which proves that $\delta_2$ is indeed a codimension $1$ face of $\Delta$ not containing the origin. 

Let $\hat f_{\delta}$ be the sum of those terms in $\hat f$ whose exponents lie in $\delta$. Then, we have 
$$\hat f_{\delta_1}=x_{n+1}- \Big(x_1+\cdots+x_n+\frac{w x_{n+2}^m}{x_1\cdots x_n}\Big),\quad 
\hat f_{\delta_2}=\frac{x_{n+2}}{x_{n+1}}- \Big(x_1+\cdots+x_n+\frac{w x_{n+2}^m}{x_1\cdots x_n}\Big).$$
Note that $\hat f_{\delta_i}$ $(i=1, 2)$
are diagonal in the sense that the number of non-constant monomials appeared in them coincide with the number of variables $n+2$. 
Calculating the determinants of the square matrices formed by the exponents appeared in $\hat f_{\delta_i}$, we get 
$$\mathrm{det}\left(\begin{array}{ccccc}
1&&&&\\
&\ddots&&&\\
&&1&&\\
&&&1&\\
-1&\ldots&-1& 0&m
\end{array}
\right)=m, \quad 
\mathrm{det}\left(\begin{array}{ccccc}
1&&&&\\
&\ddots&&&\\
&&1&&\\
&&&-1&1\\
-1&\ldots&-1& 0 &m
\end{array}
\right)=-m.$$
Since $m\not\equiv 0\mod p$, $\hat f_{\delta_i}$ are non-degenerate. It follows that $\hat f$ is non-degenerate.  
The cones with vertex $0$ over $\delta_1$ and $\delta_2$ give a triangulation of $\Delta$. We conclude that 
$$(n+2)!\mathrm{vol}(\Delta_\infty(\hat f))= m + m =2m.$$ 

(ii) Assume $n+1=2m$. In this case, the two equations for $\delta_1$ and $\delta_2$ in case (i) are 
the same. Thus, there is only one codimension $1$ face $\delta$ of $\Delta$ not containing the origin. 
The equation for $\delta$ is 
$$\delta: \ x_1+\cdots +x_{n+1} + 2x_{n+2} = 1.$$
This $\delta$ contains the whole $\{V_1, \cdots, V_{n+1}, V_{n+2}, V_{n+3}\}$ and $\hat f_{\delta}=\hat f$ 
is no longer diagonal. The same triangulation as in case (i) shows that 
$(n+2)!\mathrm{vol}(\Delta_\infty(\hat f))=2m$.  Suppose $\sigma$ is a face (of any dimension) of 
$\Delta_\infty(\hat f)$ not containing the origin. For non-degeneracy, 
we need to check that the system 
$$\frac{\partial \hat f_{\sigma}}{\partial x_1}= \cdots = \frac{\partial\hat  f_{\sigma}}{\partial x_{n+2}}=0$$ 
has no non-zero solutions over $\overline{\mathbb F}_q$. 

Suppose $\sigma$ has codimension at least two. We claim either $V_{n+1}$ or $V_{n+2}$ is not in $\sigma$. Otherwise, let
$\phi: \mathbb R^{n+2}\to \mathbb R$ be a linear function such that $\phi|_{\Delta_\infty(\hat f)}\leq 1$ and $\phi|_{\Delta_\infty(\hat f)}$ 
attains it maximum $1$ exactly at $\sigma$. Then we have 
$$\phi(V_{n+1})=\phi(V_{n+2})=1,\quad \phi(V_1), \ldots, \phi(V_n),\phi(V_{n+3})\leq 1.$$
Moreover, we have $\phi(V_i)<1$ for at least one $i\in \{1, \ldots, n, n+3\}$ since otherwise $\sigma$ contains all 
$V_j$ $(j=1, \ldots, n+3)$ and thus has codimension $1$.
We have $$V_1+\cdots+V_n+V_{n+3} -m(V_{n+1}+V_{n+2})=0.$$
So 
\begin{eqnarray*}
\phi(V_i)=-\sum_{j \in \{1, \ldots, n, n+3\}-\{i\}} \phi(V_j)+m(\phi(V_{n+1})+\phi(V_{n+2}))\geq -n+2m=1,
\end{eqnarray*}
which contradicts to $\phi(V_i)<1.$ This proves the claim. It implies that $\sigma$ is a face of $\delta_1$ or $\delta_2$ described 
in the proof of (i). For such $\sigma$, 
the proof of (i) already shows that $\hat f_\sigma$ is nondegenerate and thus the above system has no nonzero solutions. 

It remains to check the codimension one face $\sigma = \delta$.  
We want to show that the system 
$$\frac{\partial \hat f}{\partial x_1}= \cdots = \frac{\partial \hat f}{\partial x_{n+2}}=0$$ 
has no non-zero solutions over $\overline{\mathbb F}_q$. This system is 
$$w\frac{x_{n+2}^{m}}{{x_1^2 \cdots x_n}} -1 =\cdots =  w\frac{x_{n+2}^{m}}{{x_1\cdots x_n^2}} -1 = 1 -\frac{x_{n+2}}{x_{n+1}^2} = \frac{1}{x_{n+1}} - mw\frac{x_{n+2}^{m-1}}{x_1 \cdots x_n}=0. $$
It forces $x_1=\cdots =x_n$ and $x_{n+2}=x_{n+1}^2$. Substituting this into 
the last equation and noting that now $2m=n+1$, we obtain 
$$\frac{1}{x_{n+1}} - mw\frac{x_{n+1}^{2m-2}}{x_1^n}= \frac{1}{x_{n+1}} - mw\frac{x_{n+1}^{n-1}}{x_1^n} =0,$$
that is, 
$$\Big(\frac{x_1}{x_{n+1}}\Big)^n = mw.$$
On the other hand, the first equation becomes 
$$0=w\frac{x_{n+1}^{2m}}{{x_1^2 \cdots x_n}} -1 = w\frac{x_{n+1}^{n+1}}{{x_1^{n+1}}} -1 =  \frac{1}{m} \frac{x_{n+1}}{x_1}-1.$$
This implies that $x_1/x_{n+1} = 1/m$. Substituting this into the previous equation, we obtain 
$w = m^{-(n+1)}$. 
This is the only exception for non-degeneracy.  

(iii) Assume $n+1 > 2m$. Let $\delta$ be a codimension $1$ face of $\Delta$ not containing the origin in $\mathbb R^{n+2}$. If $\delta$ does not contain $V_{n+2}$, then $\delta$ must contain $\{V_1, \cdots, V_{n+1}, V_{n+3}\}$. 
The equation of $\delta$ is 
$$\delta: \ x_1+\cdots +x_{n+1} + \frac{n+1}{m} x_{n+2} = 1.$$
Substituting $V_{n+2}$ into this equation, one finds that 
$$0 + \cdots + 0 + (-1) + \frac{n+1}{m} = \frac{n+1 - m}{m} >1,$$
contradicting to our assumption that $\delta$ is a codimension $1$ face of $\Delta$ not containing the origin. 
Thus, $\delta$ must contain $V_{n+2}$. If $\delta$ does not contain $V_{n+1}$, then $\delta$ must contain $\{V_1, \cdots, V_n, V_{n+2}, V_{n+3}\}$. 
The equation of $\delta$ is 
$$\delta: \ x_1+\cdots +x_{n} + \frac{n+1-m}{m}x_{n+1} + \frac{n+1}{m} x_{n+2} = 1.$$
Substituting $V_{n+1}$ into this equation, one finds that 
$$0 + \cdots + 0  + \frac{n+1-m}{m}  + 0= \frac{n+1 - m}{m} > 1.$$ Contradiction again. 
It follows that $\delta$ must contain both $V_{n+1}$ and $V_{n+2}$. The face $\delta$ can only miss one of 
the remaining $n+1$ vertices $\{ V_1, \cdots, V_n, V_{n+3}\}$. This leads to the following $n+1$ 
faces with equations 
\begin{eqnarray*}
\delta_1:   & (2m-n)x_1 +\cdots + x_{n+1} + 2x_{n+2}=1, \\ 
& \hat f_{\delta_1}=-(x_2 +\cdots +x_n)+x_{n+1} + 
\frac{x_{n+2}}{x_{n+1} }-w\frac{x_{n+2}^m}{x_1\cdots x_n},\\
&\vdots&\\
\delta_n: & \ x_1 +\cdots + (2m-n)x_n+x_{n+1} + 2x_{n+2}=1, \\ 
& \  \hat f_{\delta_n}= -(x_1 +\cdots +x_{n-1})+x_{n+1} + 
\frac{x_{n+2}}{x_{n+1} }-w\frac{x_{n+2}^m}{x_1\cdots x_n},\\
\delta_{n+1}: & \ x_1 +\cdots + x_{n+1} + 2x_{n+2}=1, \\
& \  \hat f_{\delta_{n+1}}=-( x_1 +\cdots +x_n)+x_{n+1} + 
\frac{x_{n+2}}{x_{n+1} }.
\end{eqnarray*}
As above, one checks that these $\delta_i$ ($1\leq i \leq n+1$) are indeed codimension $1$ faces of $\Delta$ 
not containing the origin. Furthermore, each $\hat f_{\delta_i}$ is diagonal whose exponent matrix has 
determinant $\pm 1$ and thus non-degenerate. This proves that $\hat f$ 
itself is non-degenerate for all $w\not =0$, and by the facial triangulation, we have
$$(n+2)! \mathrm{vol}(\Delta_{\infty}(\hat f)) = \sum_{i=1}^{n+1}(n+2)! \mathrm{vol}(\Delta_{\infty}(\hat f_{\delta_i})) = n+1.$$
The proof of the lemma is complete. 
\end{proof}

We conclude this paper with several questions for further study. 

\begin{question}
(i)  What can be said for the toric exponential sum at the exceptional singular fibre $w= m^{-(n+1)}$ when $n+1=2m$? 

(ii) Katz's  exotic Kloosterman sheaf is always pure. But the exotic inverted Kloosterman sheaf studied in this paper is not always pure. It would be interesting to give an explicit description of the weight decomposition for the 
exotic inverted Kloosterman sheaf. The answer seems to be more subtle as it would also depend on the multiplicative character $\chi$. For 
sufficiently general $\chi$, the sheaf should be pure. For the case $\chi=1$ and $(n+1, p)=1$, the weight decomposition is 
determined  in \cite{LW}. 

(iii) Zelingher \cite{EZ} recently links Katz's  exotic Kloosterman sheaf to special values of the Bessel function for 
generic irreducible representations of general linear groups over $\mathbb F_q$. It would be interesting to know if 
there is a similar connection for the exotic inverted Kloosterman sheaf. 

\end{question}

\end{document}